\pgfplotsset{compat = 1.7}
\newtheorem{theorem}{Theorem}
\newtheorem{proposition}{Proposition}
\newtheorem{lemma}{Lemma}
\title{{On the stability of an adaptive learning dynamics \\ in traffic games}
\thanks{This work was partially supported by FONDECYT 1130564 and Complex Engineering Systems Institute, ISCI (ICM-FIC: P05-004-F, CONICYT: FB0816).}}
\author{Miguel A. Dumett
\thanks{Computational Science Research Center, San Diego State University, California, USA; (mdumett@sdsu.edu).} 
\and Roberto Cominetti 
\thanks{Facultad de Ingenier\'ia y Ciencias, Universidad Adolfo Ib\'a\~nez, Santiago, Chile; (roberto.cominetti@uai.cl).}
}
\date{\today}
\long\def\drop#1{}
\begin{document}

\maketitle

\begin{abstract}
This paper investigates the dynamic stability of an adaptive learning procedure in a 
traffic game. Using the Routh-Hurwitz criterion we study the stability of the rest points 
of  the corresponding mean field dynamics. In the special case with two routes and two 
players we provide a full description of the number and nature of these rest points as well 
as the global asymptotic behavior of the dynamics. Depending on the parameters of the model, 
we find that there are either one, two or three equilibria and we show that in all cases the mean 
field trajectories converge towards a rest point for almost all initial conditions.
\end{abstract}

{\textbf{Keywords:}}
Congestion games, adaptive learning dynamics, stochastic algorithms, routing equilibrium, dynamical systems, stability, Routh-Hurwitz criterion.

{\textbf{AMS subject classification:}}
91A05, 91A06, 91A10, 91A15, 91A25, 91A26, 91A60.

\pagebreak

\section{Introduction}
Traffic in congested networks is frequently modeled as a game among drivers, with equilibrium   interpreted as a 
steady state  that emerges from some unspecified adaptive mechanism in driver behavior. 
This is the case in Rosenthal's model where drivers are taken 
as individual players  \cite{ros}, in Wardrop's non-atomic equilibrium with traffic modeled by continuous flows  
\cite{rwar}, and  in {stochastic equilibrium} 
with routing decisions based on random choice models  \cite{dag,fis}. 
Empirical evidence for the existence of some adaptive 
mechanism that leads to  equilibrium has  been presented in \cite{avi,hor,mkp,sel}.

There is a vast literature dealing with adaptive dynamics 
in repeated games.
The most prominent procedure is {\em fictitious play} which assumes that at each 
stage players choose  a best response to the empirical distribution of past moves 
by their opponents  \cite{bro,rob}, or a smooth best response using a Logit 
random choice \cite{fl,hsa}. For large games this can be very demanding as it 
requires to monitor the moves of all players. A milder assumption 
is that  players observe only the payoff obtained at every stage. Procedures such 
as exponential weight \cite{fr,acf}, calibration \cite{fv,fv2}, and no-regret \cite{han,har,hm}, 
deal with such limited information situations: players build statistics of their past 
performance and infer what the outcome would have been if a different strategy 
had been played. Eventually, adaptation leads to a steady state  in which 
no player regrets the choices she makes. For a complete account of these dynamics 
we refer to the monographs \cite{fl,Yo}.

A simpler discrete time adaptive process was considered in \cite{cominetti}  using only the  sequence of 
realized payoffs. The idea is similar to  {\em reinforcement dynamics} 
\cite{ar,beg,bos,er,las,po}, though it differs in the way the state is defined 
as well as in how this state affects the player's decisions. 
Namely,  at every stage $n\in\mathbb{N}$ each player $i\in I$ selects a route $r^i_n\in R$ 
at random with Logit probabilities  $\pi^{ir}_n\!\!=\!\pi^{ir}\!(x^i_n)$
(see \eqref{adp} in the next section) where the vector $x^i_n=(x^{ir}_n)_{r\in R}$ is a state variable 
that represents player $i$'s estimates of the average travel times on all possible routes $r\in R$. 
The collective choices of all the players determine the random load $u^r_n$ of each route 
and the corresponding travel times $C^r_{u^r_n}$. Each player $i$ then observes the travel time of the 
route  $r=r^i_n$ that was chosen and updates her estimate  for that particular route as a weighted average 
between $x^{ir}_n$ and the observed travel time $C^r_{u^r_n}$.  
This procedure is repeated day after day, generating 
a discrete time stochastic process.

A basic question is whether this simple adaptive mechanism induces coordination among players 
and leads towards an equilibrium. A partial answer was provided in \cite{cominetti} 
using results in stochastic approximation \cite{ben,kyi} by studying the associated mean field dynamics 
\begin{equation}
\dot x^{ir}= \pi^{ir}\!(x^i)(C^{ir}\!(x^{\mbox{-}i}) - x^{ir})\quad \quad \forall\, i \in I, r \in R, \label{adintro}
\end{equation}
where $C^{ir}\!(\cdot)$ is the expected travel time of route $r$ conditional on the event that
player $i$ chooses this route (a detailed description is given in the next section).
The rest points of \eqref{adintro} turn out to be Nash equilibria for 
an underlying perturbed game.  Moreover, when the Logit parameters are small  enough there is a unique
rest point which is a global attractor, and the stochastic process 
converges almost surely towards this equilibrium \cite{cominetti}. 

In this paper we explore what happens for larger values of the Logit parameters
in terms of the number and nature of the rest points of the mean field dynamics \eqref{adintro}.
The paper is organized as follows. 
In \S\ref{sec2} we review the  model  for  the adaptive dynamics in the traffic game, 
including a precise description of the stochastic process and the mean field dynamics.
In \S\ref{sec3} we discuss the Routh-Hurwitz criterion for stability of equilibria, 
providing an explicit expression for the Jacobian of \eqref{adintro}.
Our main results are presented in \S\ref{sec4} and deal with the 
asymptotic stability of equilibria for a $2\times 2$ traffic game.
Specifically, in \S\ref{sec41}  we exploit the Routh-Hurwitz criterion
to derive a simple necessary and sufficient condition for the stability of rest points. 
Then, in \S\ref{sec42} we show that there can be only one, two or three equilibria, and we 
determine their stability. Next, in  \S\ref{sec43} we show that 
the mean field dynamics are $K$-monotone for a suitable order $\leq_K$,
from which we deduce that  the forward orbits of \eqref{adintro} converge towards
a rest point for almost all initial conditions. 
Finally, in \S\ref{sec44} we 
consider the case in which both players are identical. In this case there is a 
special symmetric equilibrium whose  stability can be characterized
explicitly  in terms of the parameters defining the model, and we show 
that when the symmetric rest point becomes unstable there appear 
two additional non-symmetric stable equilibria.

\section{Adaptive dynamics in a simple traffic game}\label{sec2}
Consider a   set of routes $R=\{1,\ldots,M\}$ which are used concurrently by a finite set of drivers $I=\{1,\ldots,N\}$.
The travel time of each route $r\in R$ depends on the number of drivers on that route and is given by a
non-decreasing sequence 
\begin{equation}
C^r_1 \le C^r_2 \le \cdots \le C^r_N. \label{cis}
\end{equation}
Suppose that each driver $i\in I$ chooses a route $r^i\in R$ at random with Logit probabilities
\begin{equation}
\mathbb{P}(r^i\!=\!r)=\frac{ \exp(- {\beta}_i x^{ir})}{ \sum_{s \in R} \exp(- {\beta}_i x^{is})}\triangleq \pi^{ir}\!(x^i)\quad\quad\forall\,r\in R\label{adp}
\end{equation}
where the vector $x^i=(x^{ir})_{r\in R}$ describes the driver's {\em a priori} estimate of the travel time of each 
possible route. 
The parameter $\beta_i>0$ is a characteristic of the player and 
captures how sensitive is the driver to differences of costs: for $\beta_i$ small the 
probability distribution $\pi_i(x^i)$ is roughly uniform over $R$, whereas for $\beta_i$ large it assigns
higher probabilities to routes with smaller  travel time estimates $x^{ir}$.

Let $Y^{ir}\!\!=\!\mathbbm{1}_{\{r^i=r\}}$ be the random variable indicating whether driver $i$ selects route $r$. 
The {\em load} of route $r$ is $u^r\!=\!\sum_{i\in I}Y^{ir}$ (the number of drivers that have chosen to utilize that route) which induces a random 
travel time $C^r_{u^r}$. Hence, while the route choice of driver $i$ is based solely on her own
estimate $x^i$, the route travel times depend on the collective choices of all players.

Following \cite{cominetti}, we consider a dynamical model in which the estimates $x^{ir}$ evolve 
in discrete time as a weighted average of travel times experienced in the past.
Formally, on day $n\in \mathbb{N}$ each driver $i\in I$ chooses
a route $r^i_n\in R$ at random according to the Logit probabilities \eqref{adp} based on her current
estimate vector $x_n^i$.  After observing the travel time of the chosen route $r^i_n$,
player $i\in I$ updates her estimate for this route  as a weighted average between the previous estimate and the new observation, 
keeping unchanged the estimates for the routes that were not observed, namely
\begin{equation}\label{discrete_dynamics}
x^{ir}_{n+1}=\left\{\begin{array}{cl}
(1\!-\!\alpha_n)x^{ir}_n+\alpha_n C^r_{u^r_n}&\mbox{for }r=r^i_n\\
x^{ir}_n&\mbox{for }r\neq r^i_n\end{array}\right.
\end{equation}
where $\alpha_n\in(0,1)$ is the weight assigned to the new observation. 
This discrete time Markov process  can be written as a Robbins-Monro process
\begin{equation}\label{discrete_dynamics2}
\mbox{$\frac{x^{ir}_{n+1}-x^{ir}_n}{\alpha_n}$}=Y^{ir}_n(C^r_{u^r_n}-x^{ir}_n)\quad\quad\forall\,i\in I, r\in R
\end{equation}
which can be analyzed using the ODE method in stochastic approximation. Indeed, under the mild conditions 
$\sum_{n\geq 0}\alpha_n=\infty$ and $\sum_{n\geq 0}\alpha_n^2<\infty$, the asymptotic behavior
of this stochastic process is closely related to the asymptotics of the mean field  dynamics
\begin{equation}\label{mean_field}
\dot x^{ir}=\mathbb{E}_\pi[Y^{ir}(C^r_{u^r}-x^{ir})]\quad\quad\forall\,i\in I, r\in R
\end{equation}
where $\mathbb{E}_\pi[\cdot]$ denotes expectation with respect to the probability distribution
induced by the player's choice probabilities $\pi_i=\pi_i(x^i)$ for all $i\in I$.
Specifically, if \eqref{mean_field} has a global attractor $x^*\in\mathbb{R}^{MN}$ then
the stochastic process \eqref{discrete_dynamics} converges almost surely towards $x^*$, whereas 
a local attractor has a positive probability of being attained as the limit of $x_n$. This raises  
the question of characterizing the local attractors of the mean field dynamics \eqref{mean_field}.

Before proceeding let us introduce some notation.
We  denote $x=(x^i)_{i\in I}$  the vector 
of player's estimates, and $x^{\mbox{-}i}=(x^j)_{j\neq i}$ the estimates of all the drivers except $i$.
The vector field defining \eqref{mean_field} can be expressed more
explicitly by conditioning on the random variable $Y^{ir}$. 
Denoting $u^r_{\mbox{-}i}=\sum_{j\neq i}Y^{jr}$ we have
\begin{equation}
\dot x^{ir}= \pi^{ir}\!(x^i) (C^{ir}\!(x^{\mbox{-}i}) - x^{ir})\quad \quad \forall\, i \in I, r \in R, \label{ad}
\end{equation}
where
\begin{equation}\label{expected}
C^{ir}\!(x^{\mbox{-}i})=\mathbb{E}_\pi[C^r_{u^r}|Y^{ir}\!=\!1] =\mathbb{E}_\pi[C^r_{1+u_{\mbox{-}i}^r}].
\end{equation}
Note that the latter  depends only on the choice probabilities $\pi_j^r=\pi_j^r(x^j)$ of the drivers  $j\neq i$,
namely
\begin{eqnarray}\nonumber
C^{ir}\!(x^{\mbox{-}i})&=& \sum_{u = 0}^{N-1} C^r_{1+u}\;\mathbb{P}(u^r_{\mbox{-}i}\!=\!u)\\
&=&\sum_{u = 0}^{N-1} C^r_{1+u} \!\!\!\!\!\sum_{\tiny{\begin{array}{c} A\!\subseteq\! I \!\setminus\! \{ i \}\\|A| = u \end{array}}} \!\!\prod_{j \in A} {\pi}_j^r \prod_{j \notin A} (1\! -\! {\pi}_j^r).\label{adc}
\end{eqnarray}

\section{Rest points and dynamic stability}\label{sec3}
Let $S$ denote the set of rest points of the  dynamics \eqref{ad}, that is to say, 
the solutions of the  fixed-point equations
\begin{equation}
x^{ir}=C^{ir}\!(x^{\mbox{-}i})\quad\quad \forall\, i \in I, r \in R. \label{eq}
\end{equation}

The existence of rest points follows directly from Brower's fixed point theorem, for all possible values of the parameters $\beta_i$ and $C^r_u$.
From a structural viewpoint we note that \eqref{eq} involves only polynomial and exponential functions 
and therefore $S$ is a definable set in the $o$-minimal structure ${\mathbb R}_{\mathop{\rm alg},\exp}$  known 
as the {\em real exponential field} \cite[Example 1.7]{dries}.  It follows that $S$ has finitely many connected components.
In fact, from  \cite[Theorem 3.12]{coste}, there is an integer $k$ which depends only on $N$ and $M$ such that  $S$ has 
at most $k$ connected components for all possible values of the parameters. Moreover, for each $i\in I$ and $r\in R$ the projection
$S^{ir}=\{x^{ir}: x \in S\}$ onto the real line is a finite union of intervals and points. 

Now,  according to  \cite[Theorem 10]{cominetti}, if we have $\omega\Delta<2$ where 
 \begin{eqnarray*}
 \omega&=&\max_{i\in I}\mbox{$\sum_{j\neq i}\beta_j$}\\
 \Delta&=&\max_{r\in R}\max_{u=2,\ldots,N} C^r_{u}-C^r_{u-1}
 \end{eqnarray*}
then \eqref{ad} has a unique rest point $x^*$ which is moreover a global attractor for the 
continuous time dynamics, and the stochastic process \eqref{discrete_dynamics} satisfies $x_n\to x^*$ almost surely.
The condition $\omega\Delta<2$ holds when either the $\beta_i$'s are small (i.e. the players are 
mildly sensitive to travel time differences) or the congestion jump $\Delta$ is small (i.e.  travel times
do not vary too much with increasing congestion). In this paper we are concerned with the study of the dynamic stability 
of the rest points of  the mean field  dynamics  \eqref{ad} beyond this regime when $\omega\Delta\geq 2$.

\subsection{The Jacobian of \eqref{ad} and Routh-Hurwitz stability}
Let $G:\mathbb{R}^{NM}\to\mathbb{R}^{NM}$ with $G^{ir}(x)=\pi^{ir}\!(x^i) (C^{ir}\!(x^{\mbox{-}i}) - x^{ir})$ 
be the vector field that defines the mean field dynamics \eqref{ad}. A rest point 
$x\in S$ is a linearly stable local attractor if all the eigenvalues of the Jacobian matrix  
$$J(x) = \left( \frac{\partial G^{ir}}{\partial x^{ks}}(x) \right)_{i,k \in I; r,s \in R}$$
have strictly negative real part.
The next Lemma provides an explicit expression for this Jacobian.
For notational convenience we will omit the variables
and write $\frac{\partial G^{ir}}{\partial x^{ks}}$ instead of $\frac{\partial G^{ir}}{\partial x^{ks}}(x)$
and $\pi^{ir}$ in place of $\pi^{ir}\!(x^i)$.
We also use the Kronecker delta  $\delta_{ij}$ 
which is equal to 1 if $i=j$ and 0 otherwise.

\vspace{1ex}
\begin{lemma} Let $x\in S$ be a rest point for the dynamics \eqref{ad}. 
For each $r\in R$ and $i,k\in I$ let us denote $u^r_{\mbox{-}ik}=\sum_{j\neq i,k}Y^{jr}$ and define
$$\Lambda_{ik}^r=\left\{\begin{array}{cl}
\mathbb{E}_\pi[C^r_{2+u_{\mbox{-}ik}^r}\!- C^r_{1+u_{\mbox{-}ik}^r}]&\mbox{if }k\neq i,\\
0&\mbox{if }k=i.
\end{array}\right.$$
Then the 
Jacobian $J(x)$ has entries
\begin{equation}
\frac{\partial G^{ir}}{\partial x^{ks}}=\pi^{ir} \big( \beta_k\pi_k^r(\pi_k^s-\delta_{rs})\Lambda_{ik}^r - \delta_{ik}\delta_{rs}\big).\label{partial113}
\end{equation}\end{lemma}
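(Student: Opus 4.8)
The plan is to compute the partial derivative $\frac{\partial G^{ir}}{\partial x^{ks}}$ directly from the product form $G^{ir}(x)=\pi^{ir}(x^i)\,(C^{ir}(x^{\mbox{-}i})-x^{ir})$ by the product rule, treating the two factors separately, and then to simplify using the rest-point equation \eqref{eq}. The key observation that makes everything collapse is that at a rest point we have $C^{ir}(x^{\mbox{-}i})-x^{ir}=0$, so the term in which the derivative falls on $\pi^{ir}$ vanishes identically; only the term where the derivative hits $(C^{ir}(x^{\mbox{-}i})-x^{ir})$ survives, multiplied by the bare factor $\pi^{ir}$. This is why the hypothesis $x\in S$ is essential.

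First I would record the two elementary derivative facts. For the Logit factor, $\pi^{ir}(x^i)$ depends only on the block $x^i$, so $\frac{\partial \pi^{ir}}{\partial x^{ks}}=0$ unless $k=i$, and a standard computation with the softmax gives $\frac{\partial \pi^{ir}}{\partial x^{is}}=\beta_i\,\pi^{ir}(\pi^{is}-\delta_{rs})$ (note the minus sign because the exponent carries $-\beta_i x^{ir}$). For the cost factor, $C^{ir}(x^{\mbox{-}i})$ depends only on $x^{\mbox{-}i}$, hence on $x^{ks}$ only when $k\neq i$; and from the expansion \eqref{adc}, $C^{ir}(x^{\mbox{-}i})=\mathbb{E}_\pi[C^r_{1+u^r_{\mbox{-}i}}]$ is, as a function of the choice probabilities $\pi^r_j$ ($j\neq i$), multilinear, so differentiating in $\pi^r_k$ isolates the Bernoulli variable $Y^{kr}$ and produces exactly $\mathbb{E}_\pi[C^r_{2+u^r_{\mbox{-}ik}}-C^r_{1+u^r_{\mbox{-}ik}}]=\Lambda^r_{ik}$; note also that $C^{ir}$ does not depend on $x^{ks}$ for $s\neq r$ at all. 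Also $\frac{\partial x^{ir}}{\partial x^{ks}}=\delta_{ik}\delta_{rs}$.

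Then I would assemble the pieces. By the product rule,
\begin{align*}
\frac{\partial G^{ir}}{\partial x^{ks}}
&=\frac{\partial \pi^{ir}}{\partial x^{ks}}\,\big(C^{ir}(x^{\mbox{-}i})-x^{ir}\big)
+\pi^{ir}\Big(\frac{\partial C^{ir}}{\partial x^{ks}}-\frac{\partial x^{ir}}{\partial x^{ks}}\Big).
\end{align*}
The first summand is zero since $x\in S$ forces $C^{ir}(x^{\mbox{-}i})-x^{ir}=0$. In the second summand, $\frac{\partial C^{ir}}{\partial x^{ks}}$ decomposes by the chain rule as $\frac{\partial C^{ir}}{\partial \pi^r_k}\cdot\frac{\partial \pi^r_k}{\partial x^{ks}}$ (only the $s$-coordinate feeds $\pi^r_k$, and only when $k\neq i$), giving $\Lambda^r_{ik}\cdot\beta_k\pi^r_k(\pi^s_k-\delta_{rs})$; the convention $\Lambda^r_{ik}=0$ for $k=i$ folds the $k=i$ case (where $C^{ir}$ is constant in $x^{ks}$) into the same formula. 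Subtracting $\frac{\partial x^{ir}}{\partial x^{ks}}=\delta_{ik}\delta_{rs}$ yields precisely \eqref{partial113}.

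The main obstacle is purely bookkeeping rather than conceptual: one must be careful to verify the multilinearity claim for $C^{ir}$ in the variables $\pi^r_j$ and to check that differentiating in $\pi^r_k$ really returns $\Lambda^r_{ik}$ — concretely, writing $C^{ir}=\pi^r_k\,\mathbb{E}_\pi[C^r_{2+u^r_{\mbox{-}ik}}]+(1-\pi^r_k)\,\mathbb{E}_\pi[C^r_{1+u^r_{\mbox{-}ik}}]$ exhibits the linear dependence on $\pi^r_k$ with slope exactly $\mathbb{E}_\pi[C^r_{2+u^r_{\mbox{-}ik}}-C^r_{1+u^r_{\mbox{-}ik}}]$, after which the result is immediate. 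One should also keep the sign in the Logit derivative straight and remember that $C^{ir}$ has no dependence on $x^{is}$, so the only surviving cross-block terms are those with $k\neq i$, correctly matched to the nonzero values of $\Lambda^r_{ik}$.
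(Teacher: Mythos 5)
Your proposal is correct and follows essentially the same route as the paper: apply the product rule, use the rest-point identity $C^{ir}(x^{\mbox{-}i})=x^{ir}$ to kill the term where the derivative falls on $\pi^{ir}$, condition on $Y^{kr}$ to exhibit $C^{ir}$ as affine in $\pi_k^r$ with slope $\Lambda_{ik}^r$, and finish with the standard Logit derivative $\frac{\partial \pi_k^r}{\partial x^{ks}}=\beta_k\pi_k^r(\pi_k^s-\delta_{rs})$. All the individual computations (signs included) check out, so there is nothing to add.
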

\begin{proof}
Since a rest point $x\in S$ satisfies $C^{ir}\!(x^{\mbox{-}i})=x^{ir}$ it follows that
\begin{eqnarray}\nonumber
\frac{\partial G^{ir}}{\partial x^{ks}}&=&\frac{\partial \pi^{ir}}{\partial x^{ks}} \; (C^{ir}\!(x^{\mbox{-}i}) - x^{ir}) + \pi^{ir} \; \frac{\partial}{\partial x^{ks}} \; (C^{ir}\!(x^{\mbox{-}i}) - x^{ir})\\
&=&\pi^{ir} ( \mbox{$\frac{\partial C^{ir}}{\partial x^{ks}}$} - \delta_{ik}\delta_{rs}).\label{partial111}
\end{eqnarray}
In order to compute $\frac{\partial C^{ir}}{\partial x^{ks}}$ we note that $C^{ir}\!(x^{\mbox{-}i})$ does not depend on 
$x^i$ so that for $k=i$ these derivatives are all zero. On the other hand, for $k\neq i$ the function $C^{ir}\!(x^{\mbox{-}i})$ depends 
on $x^{ks}$ only as an affine function of $\pi_k^r=\pi_k^r(x^k)$. More precisely, conditioning on $Y^{kr}$ in the expression 
\eqref{expected}  we get
\begin{eqnarray*}\label{expected2}
C^{ir}\!(x^{\mbox{-}i})&=&\pi_k^r\;\mathbb{E}_\pi[C^r_{2+u_{\mbox{-}ik}^r}] +(1\!-\!\pi_k^r)\;\mathbb{E}_\pi[C^r_{1+u_{\mbox{-}ik}^r}]\\
&=&\mathbb{E}_\pi[C^r_{1+u_{\mbox{-}ik}^r}]+\pi_k^r\;\mathbb{E}_\pi[C^r_{2+u_{\mbox{-}ik}^r}\!\!-C^r_{1+u_{\mbox{-}ik}^r}]
\end{eqnarray*}
from which it follows that
\begin{equation}\label{partial112}
\frac{\partial C^{ir}}{\partial x^{ks}}=\Lambda_{ik}^r\,\frac{\partial \pi_k^r}{\partial x^{ks}}.
\end{equation}
Now, for the Logit probabilities \eqref{adp} we have
$$\frac{\partial {\pi}_k^r}{\partial x^{ks}} =\left\{\begin{array}{cl}
- {\beta}_k {\pi}_k^r(1\! -\! {\pi}_k^r) & \mbox{if }s=r\\
{\beta}_k {\pi}_k^r\pi_k^s& \mbox{if }s\neq r
\end{array}\right.
$$
which combined with \eqref{partial112} and replaced into \eqref{partial111} yields \eqref{partial113}.
\end{proof}

Using \eqref{partial113} we have more explicitly
\begin{eqnarray} 
&\mbox{for $k=i$}\qquad&\frac{\partial G^{ir}}{\partial x^{is}} =\left\{\begin{array}{cl} -\pi^{ir}&\mbox{if }s=r\\
0&\mbox{if } s\neq r\end{array} \right. \label{gnd}\\
&\mbox{for $k\neq i$}\qquad&
\frac{\partial G^{ir}}{\partial x^{ks}} = 
\left\{\begin{array}{cl} - {\beta}_k \pi^{ir} {\pi}_k^r (1\!-\!{\pi}_k^r){\Lambda}_{ik}^r &\mbox{if }s=r \\
{\beta}_k \pi^{ir} {\pi}_k^r {\pi}_k^s\, {\Lambda}_{ik}^r&\mbox{if }s\neq r.
\end{array}\right.\label{girks}
\end{eqnarray}
Note that since $C^r_u$ increases with $u$ we have ${\Lambda}_{ik}^r\geq 0$ and  
the derivatives have a definite sign: $\frac{\partial G^{ir}}{\partial x^{ks}}\leq 0$ if $s=r$
and $\frac{\partial G^{ir}}{\partial x^{ks}}\geq 0$ for $s\neq r$.

It follows that the Jacobian can be organized 
into $N^2$ blocks as
\begin{displaymath}
J = \left(
\begin{array}{ccc}
J^{11} & \cdots & J^{1N} \\
\vdots & \ddots & \vdots \\
J^{N1} & \cdots & J^{NN} \\ 
\end{array} 
\right), \label{je}
\end{displaymath}
where the blocks $J^{ik}$ are of order $M \times M$ and are given by
\begin{eqnarray}
J^{ii} & = & -\mbox{diag}\big( (\pi^{ir})_{r \in R}\big), \label{jd} \\
J^{ik} & = & \big({\beta}_k \pi^{ir}{\pi}_k^r({\pi}_k^s-{\delta}_{sr}){\Lambda}_{ik}^r\big)_{r,s \in R}. \label{jod}
\end{eqnarray}
Each block $J^{ik}$ has negative diagonal and positive off-diagonal entries, and
$\mbox{trace}(J^{ii})=-\sum_{r\in R}\pi^{ir}=-1$ so that
\begin{equation}
\mbox{trace}(J) = \sum_{i = 1}^N \mbox{trace}(J^{ii}) = - N. \label{tr}
\end{equation}
In order to determine stability of an equilibrium $x\in S$, we must verify that all the eigenvalues of the Jacobian $J$ 
have negative real part.  The eigenvalues are the roots of the characteristic polynomial  $p(\lambda)=\det(\lambda I- J)$,
which can be expanded as
\begin{eqnarray}\nonumber
p(\lambda)=a_0 {\lambda}^{n} + a_1 {\lambda}^{n-1} + \cdots + a_{n-1} \lambda + a_{n} \label{cp}
\end{eqnarray}
with $a_0 = 1, a_1 = - \mbox{trace}(J) = N$ and $a_{n} = (-1)^{n} \mbox{det}(J)$. 

The Routh-Hurwitz stability criterion states that all the roots of the real polynomial $p(\lambda)$ (with $a_0 > 0$) have negative real part if and only if the leading minors $\{\Delta_k(p)\}_{k= 1}^n$ of the Hurwitz matrix below are positive
{\scriptsize
\begin{equation}
H(p) = \left(
\begin{array}{ccccccccc}
a_1 & a_3 & a_5 & \cdots &\cdots&\cdots & 0& 0&0 \\
a_0 & a_2 & a_4 & &  & & \vdots &\vdots & \vdots\\
0 & a_1 & a_3 &  & & &  \vdots&\vdots & \vdots\\
\vdots & a_0 & a_2 & \ddots &  & &0 & \vdots& \vdots\\
\vdots & 0 & a_1 &  & \ddots & & a_n& \vdots& \vdots\\
\vdots & \vdots & a_0 &  &  & \ddots & a_{n-1}&0 & \vdots\\
\vdots & \vdots & 0&  &  &   & a_{n-2}&a_{n} & \vdots\\
\vdots & \vdots & \vdots & &  & & a_{n-3}  &a_{n-1}   &0\\
0 & 0&0 & \cdots& \cdots& \cdots& a_{n-4}&a_{n-2}&a_n
\end{array} 
\right). \label{hm}
\end{equation}
}

This matrix has diagonal entries $a_1,\ldots,a_n$ and  each column 
contains the coefficients $a_i$ in decreasing order with the convention 
$a_i=0$ for $i<0$ and $i>n$.
Recall that a leading or principal minor is the determinant of the square sub-matrix obtained by keeping only the first $k$ rows and columns.
In our case $a_0 = 1$ and $a_1 = N$,  and the first four leading minor conditions are
\begin{eqnarray}
0<{\Delta}_1(p) & = & N, \label{m1} \\
0<{\Delta}_2(p) & = & N a_2 - a_3, \label{m2} \\
0<{\Delta}_3(p) & = & (N a_2 -a_3)a_3 + N a_5 - N^2 a_4, \label{m3} \\
0<{\Delta}_4(p) & = & N (a_2 a_3 a_4 + N a_2 a_6 + a_4 a_5 - a_3 a_6 - a_5 a_2^2 - N a_4^2) \nonumber \\
&  &{}-(a_4 a_3^2 + a_5^2 + N a_2 a_7 - a_3 a_7 - a_2 a_3 a_5 - N a_4 a_5). \label{m4}
\end{eqnarray}
Note that when $n=4$ we have $a_5=a_6=a_7=0$ so that the expression \eqref{m4} factorizes
as $a_4\Delta_3(p)$ and the condition $\Delta_4(p)>0$ reduces to $a_4>0$.

\vspace{1ex}
\noindent{\sc Remark.} The expression of  $\Delta_k(p)$ 
becomes increasingly complex as $k$ grows. It is worth noting that, according to
\cite[Dimitrov and Pe\~na]{dimitrov}, a sufficient 
condition for stability  is that $a_k>0$ and
$a_k a_{k+1} \ge \bar z\, a_{k-1} a_{k+2}$ for all $k \!=\! 1,\ldots,n\!-\!2$
where  $\bar z\approx 4.08$ is the unique real root of $z^3\! -\! 5 z^2\! +\! 4z\! -\!1\!=\!0$.
\vspace{1ex}

The Routh-Hurwitz criterion provides necessary and sufficient conditions for the asymptotic stability 
of the rest points of a system of nonlinear differential equations, and it can be applied 
even without  solving explicitly the steady state equations that arise by setting the time 
derivatives to zero. This tool is frequently utilized in the study of dynamical systems, especially in mathematical 
biology and control system theory, but it has also appeared occasionally in the context of games.  
         It was mentioned by Hofbauer and Sigmund in \cite[Example 15.6.10]{hofbauer} in connection with the 
         stability of linear Lotka-Volterra systems and replicator dynamics in games, and it was also suggested by Cressman in 
         \cite[Section 3]{cressman} as a plausible tool to characterize  
         the evolutionary stable strategies in multi-species systems.
         Unlike these examples, which describe the aggregate evolution of the frequency of strategies in a population 
         game, in this paper we apply the criterion to analyze the adaptive behavior of 
         individual players in a finite congestion game.

 The Routh-Hurwitz criterion is more general than other stability criteria. For instance, the Nyquist  criterion in control theory 
 is restricted to linear time-invariant systems, while its generalization known as the Circle criterion applies only for suitably small  
 nonlinear perturbations. Other criteria, such as Lyapunov stability, examine how 
 quickly the convergence of trajectories near a stable equilibria occurs, by constructing a potential function. Like 
 Routh-Hurwitz, the Lyapunov criterion does not need to find  the steady states of the system to determine their 
 stability, but it relies on the ability to find an appropriate potential. In this sense
 Routh-Hurwitz is simpler as it only requires  to examine the signs of certain sub-determinants of the Jacobian matrix of the system.
On the down side, as the dimension of the system increases the Routh-Hurwitz criterion must deal with larger sub-determinants 
which may become cumbersome. However, for moderate dimensions it can be used to fully characterize
the stability of rest points, as in the case of the learning dynamics in the $2\times 2$ routing game considered in the next section.

\section{Symmetric $2\times 2$ traffic game}\label{sec4}
In this section we consider the special case in which we have only two routes $r=a,b$
and two drivers $i=1,2$. 
In this case the system \eqref{eq} becomes
\begin{equation}\label{eq22}
\begin{array}{lclcl}\hline
x^{1a}&=&C_1^a\pi^{2b}+C_2^a\pi^{2a}&=&C_1^a+(C_2^a\!-\!C_1^a)\,\pi^{2a}\\
x^{1b}&=&C_1^b\pi^{2a}+C_2^b\pi^{2b}&=&C_2^b+(C_1^b\!-\!C_2^b)\,\pi^{2a}\\ \hline
x^{2a}&=&C_1^a\pi^{1b}+C_2^a\pi^{1a}&=&C_1^a+(C_2^a\!-\!C_1^a)\,\pi^{1a}\\
x^{2b}&=&C_1^b\pi^{1a}+C_2^b\pi^{1b}&=&C_2^b+(C_1^b\!-\!C_2^b)\,\pi^{1a}\\ \hline
\end{array}
\end{equation}

\subsection{Stability of equilibria}\label{sec41}
In order to give a simpler expression for the characteristic polynomial $p(\lambda)$ 
it is convenient to introduce the constants
\begin{equation}\label{constants}
\begin{array}{lcl}
{\delta}^a&=& C_2^a\! -\! C_1^a\\
{\delta}^b&=& C_2^b\! -\! C_1^b\\
\delta&=&\delta^a+\delta^b\\
\nu&=&{\beta_1\beta_2}.\end{array}
\end{equation}

\vspace{1ex}
\begin{lemma} For $i=1,2$ let $z_i=\pi^{ia}\pi^{ib}$ and $y_i=\pi^{ia}{\delta}^a  + \pi^{ib}{\delta}^b $, and 
denote  
$$\begin{array}{ccl}
Z &= &z_1z_2\\
S &= &z_1 + z_2
\end{array}
\qquad ;\qquad 
\begin{array}{ccl}
Y &= &y_1y_2\\
P &=& z_1 y_2 +  z_2y_1.
\end{array}
$$
Then, the characteristic polynomial of the Jacobian \eqref{jac22} is given by
\begin{eqnarray}
p(\lambda) &=& {\lambda}^4 + 2 {\lambda}^3 + a_2 {\lambda}^2 + a_3 \lambda + a_4\label{cp22}\\[1ex]
a_2 & = & 1+S - \nu ZY \label{a222} \\
a_3 & = & S - \nu\delta Z P \label{a322} \\
a_4 & = & Z (1-\nu\delta^2 Z). \label{a422}
\end{eqnarray}
\end{lemma}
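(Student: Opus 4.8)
The plan is to exploit the $2\times2$ block structure of the Jacobian \eqref{jac22} together with a rank-one reduction. First I would specialize \eqref{jd}--\eqref{jod} to $N=2$: since the index set defining $u^r_{\mbox{-}ik}$ is empty when $\{i,k\}=\{1,2\}$, one has $\Lambda^r_{ik}=C^r_2-C^r_1=\delta^r$, and using $1-\pi^{ka}=\pi^{kb}$ (valid because $R=\{a,b\}$) a short computation shows that the off-diagonal blocks are rank one,
\[
J^{12}=\beta_2 z_2\,v_1 w^{\top},\qquad J^{21}=\beta_1 z_1\,v_2 w^{\top},
\]
with $w=(1,-1)^{\top}$ and $v_i=(-\pi^{ia}\delta^a,\ \pi^{ib}\delta^b)^{\top}$, while the diagonal blocks are $J^{ii}=-D_i$ where $D_i=\mathrm{diag}(\pi^{ia},\pi^{ib})$. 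Hence $\lambda I-J$ has diagonal blocks $\lambda I+D_i$ and rank-one off-diagonal blocks $-\beta_k z_k\, v_i w^{\top}$.

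Next I would compute $p(\lambda)=\det(\lambda I-J)$ by a Schur complement with respect to the lower-right block. For $\lambda\notin\{-\pi^{1a},-\pi^{1b},-\pi^{2a},-\pi^{2b}\}$ the block $\lambda I+D_2$ is invertible, and eliminating it leaves $\lambda I+D_1-\nu Z\,\sigma_2(\lambda)\,v_1 w^{\top}$, where $Z=z_1z_2$, $\nu=\beta_1\beta_2$, and $\sigma_i(\lambda)=w^{\top}(\lambda I+D_i)^{-1}v_i$ is a scalar. Applying the matrix determinant lemma to this rank-one update gives
\[
p(\lambda)=\det(\lambda I+D_1)\,\det(\lambda I+D_2)\,\bigl(1-\nu Z\,\sigma_1(\lambda)\,\sigma_2(\lambda)\bigr),
\]
an identity between polynomials in $\lambda$, hence valid for all $\lambda$ by density. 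Because $\pi^{ia}+\pi^{ib}=1$ one has $\det(\lambda I+D_i)=\lambda^2+\lambda+z_i$, and clearing denominators in $\sigma_i$ yields $\det(\lambda I+D_i)\,\sigma_i(\lambda)=-(y_i\lambda+z_i\delta)$, a polynomial of degree one. Substituting, I obtain the compact form
\[
p(\lambda)=(\lambda^2+\lambda+z_1)(\lambda^2+\lambda+z_2)-\nu Z\,(y_1\lambda+z_1\delta)(y_2\lambda+z_2\delta).
\]

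Finally I would expand both products. Writing $q=\lambda^2+\lambda$, the first is $q^2+Sq+Z=\lambda^4+2\lambda^3+(1+S)\lambda^2+S\lambda+Z$, and the second equals $\nu Z\bigl(Y\lambda^2+\delta P\lambda+\delta^2 Z\bigr)$ after using $y_1z_2+y_2z_1=P$ and $z_1z_2=Z$. Collecting coefficients gives $a_0=1$, $a_1=2$, $a_2=1+S-\nu ZY$, $a_3=S-\nu\delta ZP$ and $a_4=Z-\nu\delta^2 Z^2=Z(1-\nu\delta^2 Z)$, as claimed; the value $a_1=2=-\mathrm{trace}(J)$ is consistent with \eqref{tr}. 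The argument is essentially mechanical: the only points needing a little care are spotting the rank-one factorization of the off-diagonal blocks (which is what makes the Schur/determinant-lemma route short) and the harmless polynomial-extension step that licenses the Schur complement at the excluded values of $\lambda$. A more pedestrian alternative is to expand the $4\times4$ determinant \eqref{jac22} directly, or to compute each $a_k$ as the signed sum of $k\times k$ principal minors of $J$ via $\det(\lambda I-J)=\sum_k(-1)^k e_k(J)\,\lambda^{4-k}$; there the bookkeeping is the main nuisance, which the block/rank-one structure avoids.
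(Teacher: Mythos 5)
Your derivation is correct and follows essentially the same route as the paper: both start from the explicit $4\times 4$ Jacobian \eqref{jac22} and compute its characteristic polynomial, the paper simply declaring that computation ``a routine exercise'' and omitting it. Your rank-one/Schur-complement organization of that omitted step checks out in every detail --- the factorization $J^{12}=\beta_2 z_2\,v_1w^{\top}$, $J^{21}=\beta_1 z_1\,v_2w^{\top}$, the identity $\det(\lambda I+D_i)\,\sigma_i(\lambda)=-(y_i\lambda+z_i\delta)$, and the final expansion all verify and reproduce \eqref{a222}--\eqref{a422} --- so you have in effect supplied the missing computation in a clean, checkable form.
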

\begin{proof}
Using \eqref{jd}-\eqref{jod} the Jacobian $J$ can be expressed as
\begin{equation}\label{jac22}
J=\left(\begin{array}{cc|cc}
-\pi^{1a} & 0 &-\beta_2z_2\pi^{1a}\delta^a&\hspace{0.5ex}\beta_2z_2\pi^{1a}\delta^a \\ 
0& -\pi^{1b}& \hspace{1.2ex}\beta_2z_2\pi^{1b}\delta^b&\hspace{-1.2ex}-\beta_2z_2\pi^{1b}\delta^b\\ \hline
\hspace{-1.2ex}-\beta_1z_1\pi^{2a}\delta^a&\hspace{0.5ex}\beta_1z_1\pi^{2a}\delta^a&-\pi^{2a} & 0\\
\beta_1z_1\pi^{2b}\delta^b&\hspace{-1.2ex}-\beta_1z_1\pi^{2b}\delta^b& 0& -\pi^{2b} 
\end{array}
\right).
\end{equation}
From this expression the computation of the characteristic polynomial $p(\lambda)$  is a  routine exercise 
and is omitted.
\end{proof}
\drop{
{\color{cyan}
Observe that once the rest point $x$ and the constant $\vec \beta = ({\beta}_1,{\beta}_2)$ are fixed, quantities $Z = Z(x,\vec \beta), S = S(x,\vec \beta), Y = Y(x,\vec \beta), P = P(x,\vec \beta), \nu = \nu(\vec \beta)$ are fixed. The dynamical system analysis that follows will utilize $\delta$ as a bifurcation parameter. The removal of the parameter $\beta$ from the dynamical system analysis obeys to the simplicity of the interpretation of the results that follow and not by the impossibility of the analysis.
}
}
In the sequel we will see that the sign of $a_4$ plays a crucial role in determining 
the stability of the rest point. To this end we will first establish some useful 
inequalities for the coefficients of the characteristic poynomial.

\vspace{1ex}
\begin{lemma} \label{L3} The following inequalities hold
\begin{eqnarray}
2a_2-a_3&\geq&S+2a_4/Z \label{euno}\\
a_3&\geq&Sa_4/Z\label{edos}
\end{eqnarray}
with strict inequality unless $\delta=0$.
Moreover, if $a_4\geq 0$  we have $2a_2-a_3>0$, $a_2>0$, and $a_3>0$.
\end{lemma}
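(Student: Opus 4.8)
The plan is to prove each inequality by substituting the closed forms \eqref{a222}--\eqref{a422} and cancelling the terms common to both sides, which reduces the claim to an evident nonnegativity. First I would record the elementary sign information coming from the model. The Logit weights \eqref{adp} are strictly positive, so $z_i=\pi^{ia}\pi^{ib}>0$, hence $S=z_1+z_2>0$ and $Z=z_1z_2>0$; and since the sequences \eqref{cis} are non-decreasing, $\delta^a,\delta^b\ge0$, so $\delta\ge0$ and moreover $0\le y_i\le\delta$ because $y_i=\pi^{ia}\delta^a+\pi^{ib}\delta^b$ is a convex combination of $\delta^a,\delta^b\in[0,\delta]$. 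It is convenient to set $\tilde y_i:=\delta-y_i=\pi^{ib}\delta^a+\pi^{ia}\delta^b\ge0$, noting that $\tilde y_i>0$ (and likewise $y_i>0$) as soon as $\delta>0$, since then one of $\delta^a,\delta^b$ is positive and carries a positive Logit weight.

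Next I would establish the two algebraic identities
\[
(2a_2-a_3)-(S+2a_4/Z)=\nu Z\,(2\delta^2+\delta P-2Y),\qquad
a_3-Sa_4/Z=\nu\delta Z\,(S\delta-P),
\]
which are obtained by direct substitution using $a_4/Z=1-\nu\delta^2Z$ (and $Z>0$). For \eqref{euno}: since $0\le y_1,y_2\le\delta$ we have $Y=y_1y_2\le\delta^2$, so $2\delta^2-2Y\ge0$, and $\delta P\ge0$ because $\delta,z_i,y_i\ge0$; hence the right-hand side is $\ge0$, and if $\delta>0$ then $P>0$, so $2\delta^2+\delta P-2Y\ge\delta P>0$ and the inequality is strict. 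For \eqref{edos}: $S\delta-P=z_1\tilde y_2+z_2\tilde y_1\ge0$, so $a_3-Sa_4/Z\ge0$, and if $\delta>0$ then $\tilde y_i>0$, making it strictly positive. Since $\nu=\beta_1\beta_2>0$ and $Z>0$, these give \eqref{euno}--\eqref{edos} with the stated strictness (when $\delta=0$ both right-hand sides vanish, so equality holds).

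For the final assertion I assume $a_4\ge0$; since $Z>0$ this gives $a_4/Z\ge0$. From \eqref{euno}, $2a_2-a_3\ge S+2a_4/Z\ge S>0$. From \eqref{edos}, $a_3\ge Sa_4/Z\ge0$; if $a_4>0$ this is already strictly positive, while if $a_4=0$ then $Z(1-\nu\delta^2Z)=0$ with $Z>0$ forces $\nu\delta^2Z=1$, hence $\delta>0$, and the strict form of \eqref{edos} yields $a_3>0$. Finally $2a_2=(2a_2-a_3)+a_3>0$, so $a_2>0$.

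There is no real conceptual difficulty here; the one place requiring care is the bookkeeping in the two displayed identities — this is where a sign slip would occur — together with tracking when strictness can degenerate. Positivity of the Logit weights forces $Z>0$, $P>0$ and $\tilde y_i>0$ as soon as $\delta>0$, so $\delta=0$ is the only obstruction to strict inequalities; and these same positivity facts, together with the case split on whether $a_4$ vanishes, are exactly what upgrade the weak bounds to the strict conclusions $2a_2-a_3>0$, $a_2>0$ and $a_3>0$.
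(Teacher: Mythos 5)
Your proposal is correct and follows essentially the same route as the paper: both arguments reduce \eqref{euno} and \eqref{edos} to the bounds $Y\leq\delta^2$ and $P\leq\delta S$ (your $S\delta-P=z_1\tilde y_2+z_2\tilde y_1\geq 0$ is exactly the latter), and the derivation of $2a_2-a_3>0$, $a_3>0$, $a_2>0$ from $a_4\geq 0$, including the case split at $a_4=0$ forcing $\nu\delta^2Z=1$ and hence $\delta>0$, matches the paper's. The only cosmetic difference is that you package the computation as two explicit identities for the differences rather than as a chain of inequalities, which if anything makes the sign bookkeeping more transparent.
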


\begin{proof} We note that $y_i=\pi^{ia}\delta^a +\pi^{ib}\delta^b\leq\delta$
so that $Y\leq\delta^2$ and $P\leq\delta S$. Moreover, these inequalities are strict unless $\delta=0$. 
From \eqref{a222}-\eqref{a322} we get
\begin{eqnarray*}
2a_2-a_3&=&S+\nu\delta ZP+2(1-\nu ZY)\\
&\geq&S+\nu\delta ZP+2(1-\nu\delta^2Z)
\end{eqnarray*}
and \eqref{euno} follows since $\nu\delta ZP\geq 0$ with strict inequality unless $\delta=0$. 
Also
$$a_3 = S - \nu\delta Z P\geq S-\nu\delta^2 ZS=Sa_4/Z$$
again with strict inequality unless $\delta=0$.
 
Let us suppose now that $a_4\geq 0$. Since $S>0$, from \eqref{euno} we get $2a_2-a_3>0$ 
while \eqref{edos} gives $a_3\geq 0$ which together imply $a_2>0$. It remains to show that
$a_3>0$. From \eqref{edos} this is clear if $a_4> 0$. Otherwise, when $a_4=0$ we have $\nu\delta^2Z=1$
so that $\delta>0$ and the inequality \eqref{edos} is strict.
\end{proof}

Using the previous Lemma we can establish that the rest point is stable if and only if $a_4 > 0$. The proof
exploits the fact that $X$ and $Y$ are strictly positive while
$Z$ and $W$ are non-negative.

\vspace{1ex}
\begin{theorem}\label{TT1}
Let $x\in S$ be a rest point for 
a $2\times 2$  game with corresponding Logit probabilities  $\pi^{ir}$.
Then $x$ is a stable point for  \eqref{ad} if and only if $a_4>0$, that is to say
\begin{equation}\label{stab}
\nu\delta^2\,\pi^{1a}\pi^{1b}\pi^{2a}\pi^{2b}<1.
\end{equation}
\end{theorem}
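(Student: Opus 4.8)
The plan is to apply the Routh--Hurwitz criterion to the degree-four characteristic polynomial \eqref{cp22} and show that, among the four leading-minor conditions \eqref{m1}--\eqref{m4}, only $a_4>0$ can ever fail. Since $a_0=1>0$ and $a_1=2>0$, condition \eqref{m1} holds automatically. For $n=4$ we have $a_5=a_6=a_7=0$, so as noted after \eqref{m4} the fourth condition $\Delta_4(p)>0$ reduces exactly to $a_4>0$. The task therefore splits into two directions: (i) if $a_4>0$, verify that $\Delta_2(p)>0$ and $\Delta_3(p)>0$ hold as well, so that all four conditions are met and $x$ is stable; (ii) if $a_4\le 0$, exhibit an eigenvalue of $J$ with nonnegative real part, so that $x$ is not a stable attractor.

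For direction (i), I would invoke Lemma~\ref{L3}. With $a_1=N=2$, condition \eqref{m2} is $\Delta_2(p)=2a_2-a_3>0$, which is precisely the last assertion of Lemma~\ref{L3} under the hypothesis $a_4\ge 0$. Condition \eqref{m3} is $\Delta_3(p)=(2a_2-a_3)a_3+2a_5-4a_4=(2a_2-a_3)a_3-4a_4$ since $a_5=0$. Using the two inequalities of Lemma~\ref{L3}, namely $2a_2-a_3\ge S+2a_4/Z$ and $a_3\ge Sa_4/Z$, together with $a_3>0$ and $a_4\ge 0$ (all guaranteed by Lemma~\ref{L3} when $a_4\ge 0$), I would bound
\[
(2a_2-a_3)a_3-4a_4\ \ge\ \bigl(S+2a_4/Z\bigr)\,a_3-4a_4\ \ge\ \bigl(S+2a_4/Z\bigr)\,\tfrac{S a_4}{Z}-4a_4,
\]
and then show the right-hand side is positive when $a_4>0$. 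The key elementary fact making this work is $S\ge 2\sqrt Z$ (AM--GM applied to $S=z_1+z_2$ and $Z=z_1z_2$), which gives $S^2/Z\ge 4$ and hence $(S+2a_4/Z)(Sa_4/Z)=S^2 a_4/Z+2S a_4^2/Z^2\ge 4a_4+2Sa_4^2/Z^2>4a_4$. The case $a_4=0$ has to be handled separately: then $\Delta_3(p)=(2a_2-a_3)a_3>0$ directly from $2a_2-a_3>0$ and $a_3>0$, both supplied by Lemma~\ref{L3}. This establishes that $a_4>0$ implies all of \eqref{m1}--\eqref{m4}, hence stability.

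For direction (ii), suppose $a_4\le 0$. If $a_4<0$ then $p(0)=a_4<0$ while $p(\lambda)\to+\infty$ as $\lambda\to+\infty$, so $p$ has a strictly positive real root and $x$ is unstable. If $a_4=0$ then $\lambda=0$ is a root, so $J$ has a zero eigenvalue and $x$ is not linearly stable either (it fails the strict-negativity requirement). In both subcases $x$ is not a linearly stable local attractor. Combining the two directions gives the stated equivalence, and the explicit form \eqref{stab} is just $a_4=Z(1-\nu\delta^2 Z)>0$ rewritten via $Z=z_1z_2=\pi^{1a}\pi^{1b}\pi^{2a}\pi^{2b}$ (noting $Z>0$ always, since the Logit probabilities are strictly positive).

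The main obstacle is the verification of $\Delta_3(p)>0$ under $a_4>0$: unlike $\Delta_2(p)>0$, which is handed to us verbatim by Lemma~\ref{L3}, the third minor is a genuinely bilinear expression in the coefficients and one must feed in both inequalities of Lemma~\ref{L3} \emph{simultaneously} and then still extract an extra scalar inequality ($S^2\ge 4Z$) to close the estimate. A secondary subtlety is keeping track of the boundary case $a_4=0$ throughout, since several of the "useful inequalities" become equalities or require the strictness clause ("strict unless $\delta=0$") of Lemma~\ref{L3}; in particular one should note that $a_4=0$ forces $\delta>0$, which is exactly what Lemma~\ref{L3} uses to keep $a_3$ strictly positive there.
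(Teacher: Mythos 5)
Your proposal is correct and follows essentially the same route as the paper: reduce the Routh--Hurwitz conditions for the quartic to $2a_2-a_3>0$, $(2a_2-a_3)a_3-4a_4>0$ and $a_4>0$, then use the inequalities of Lemma~\ref{L3} together with $S^2-4Z=(z_1-z_2)^2\ge 0$ (your AM--GM step is exactly the paper's $(z_1-z_2)^2a_4/Z\ge 0$) to show the first two conditions are implied by the third. Your explicit treatment of the necessity direction via $p(0)=a_4\le 0$ is a harmless elaboration of what the paper leaves implicit in the ``if and only if'' of Routh--Hurwitz.
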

\begin{proof}
As noted in the previous section, the Routh-Hurwitz conditions 
\eqref{m1}-\eqref{m4} are equivalent to
\begin{eqnarray}
2 a_2 - a_3 & > & 0, \label{rh222} \\
(2 a_2 - a_3) a_3 - 4 a_4 & > & 0, \label{rh322} \\
a_4 & > & 0. \label{rh422}
\end{eqnarray}
From \eqref{a422} we see that \eqref{rh422} is equivalent to 
$\nu\delta^2Z<1$ which is exactly \eqref{stab}. 
Now, Lemma \ref{L3} shows that when $a_4>0$ we have $2a_2-a_3>S$ so that
\eqref{rh422} implies  \eqref{rh222}. Moreover, combining this inequality with \eqref{edos} we get
$$(2 a_2 - a_3)a_3  - 4 a_4>S^2a_4/Z-4 a_4=(z_1-z_2)^2a_4/Z \geq 0$$
showing that  \eqref{rh422}  also implies \eqref{rh322}.
Hence, \eqref{rh222} and \eqref{rh322} are superfluous and the Routh-Hurwitz stability 
stability conditions are reduced to \eqref{stab}.
\end{proof}

Since $\pi^{ia}+\pi^{ib}=1$,  the stability condition \eqref{stab} can be written as
$$\pi^{1a}(1\!-\!\pi^{1a})\,\pi^{2a}(1\!-\!\pi^{2a})< (\nu\delta^2)^{-1}$$
which are level sets of the function $f(x,y)=x(1-x)y(1-y)$.
Figure \ref{fi:e0} plots this funtion and the contours of its level sets.
The region of stability is the sector comprised between the corresponding contour and the boundary of the  rectangle
$[0,1]^2$. Since the maximum of $f(x,y)$ is attained at $(\frac{1}{2},\frac{1}{2})$ with  value $\frac{1}{16}$, it follows that
when $\nu\delta^2<16$ the stability region is the full rectangle $[0,1]^2$ and all equilibria are stable.
\begin{figure}[!tbh]
\centering
\includegraphics[scale=0.38]{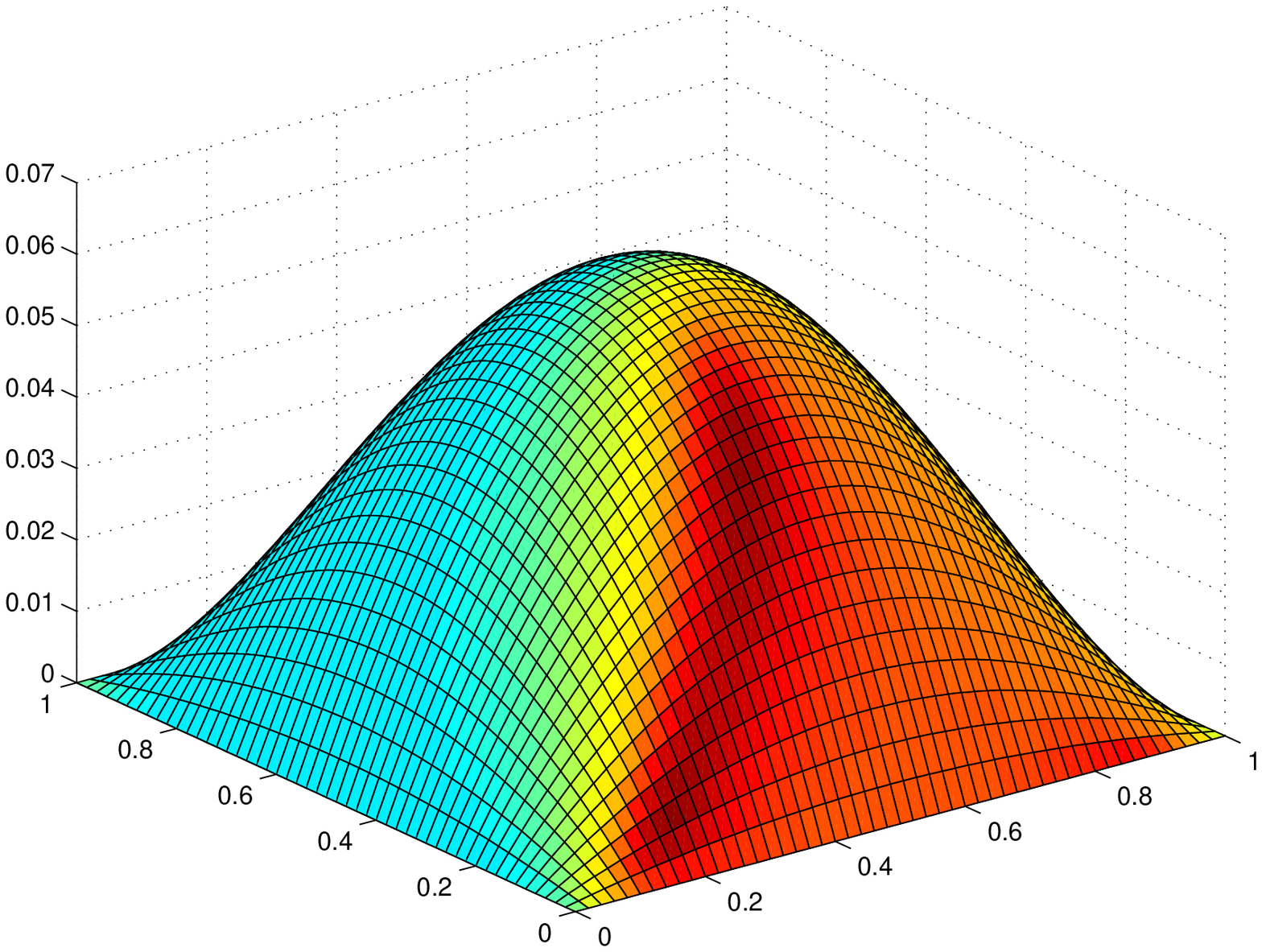}\hspace{3ex}
\includegraphics[scale=0.40]{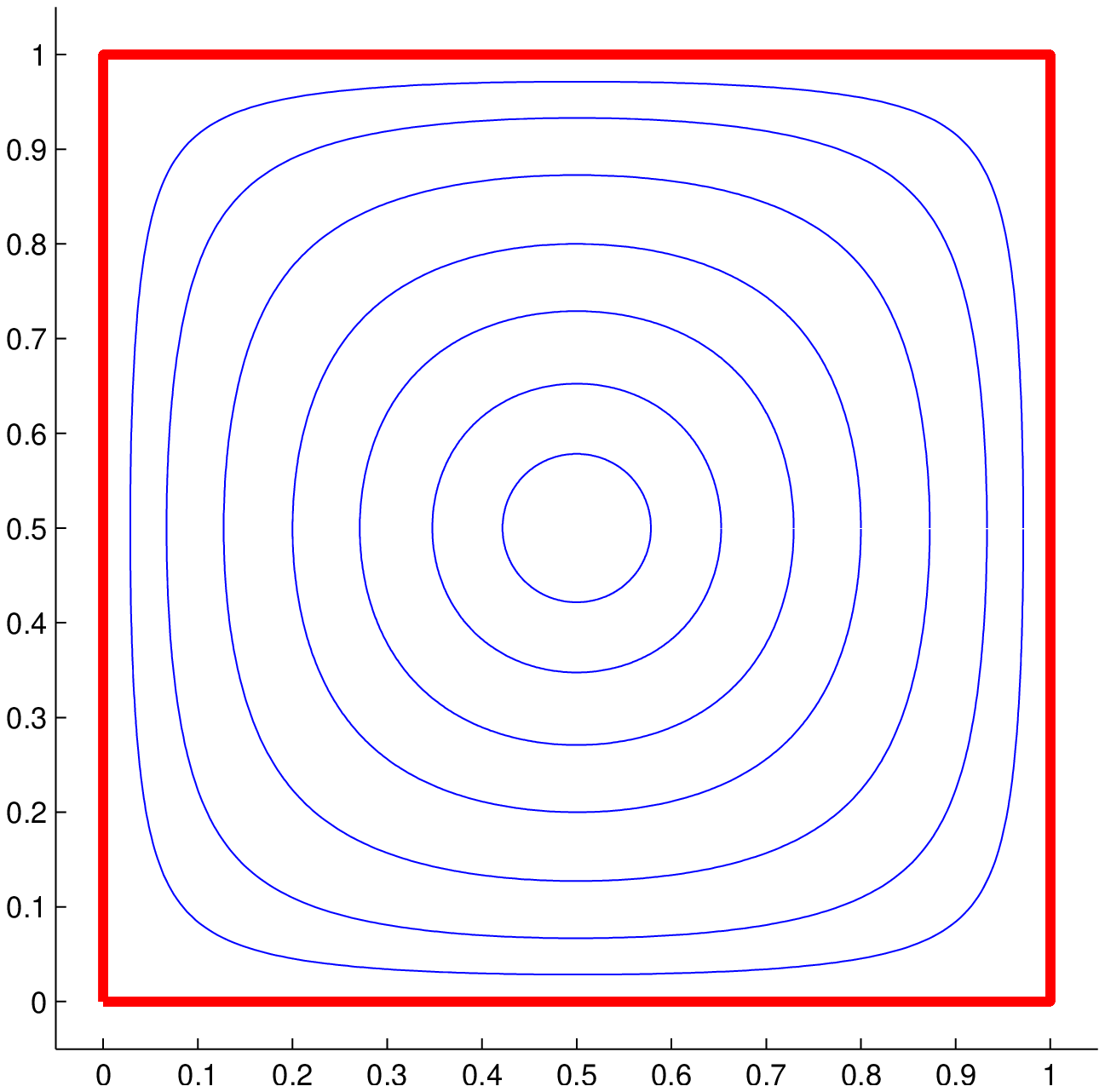}
\caption{Stability region in the $2 \times 2$  traffic game} 
\label{fi:e0}
\end{figure}

When $a_4 = 0$ the characteristic polynomial can be factored as $p(\lambda)=\lambda q(\lambda)$ with $q(\lambda)={\lambda}^3 + 2 {\lambda}^2 + a_2 \lambda + a_3$. So $p(\lambda)$ has a null eigenvalue. 
Applying the Routh-Hurwitz criterion to $q(\lambda)$
and using the inequalities $2a_2-a_3>0$ and $a_3>0$ in Lemma \ref{L3}, it follows that the 
other three eigenvalues of $p(\lambda)$ have negative real part. Therefore, the equilibrium is still stable. 

When $a_4$ becomes negative we have $p(0)=a_4<0$ and since $p(\pm\infty)=\infty$ it follows that
there are at  least one negative and one positive eigenvalue so that the rest point is an unstable
saddle point for the dynamics. In fact, by a continuity argument, when $a_4$ is slightly negative, $a_2$ and $a_3$ 
are still positive and $p(\lambda)$ will have exactly one positive root and the  other three roots have negative real part. 
Hence, for $a_4$ slightly positive, the rest point is unstable due to a crossing of the imaginary axis of a simple real negative root.
This  does not discard the presence of a Hopf bifurcation or other more complex dynamical behavior. The non-existence of Hopf 
bifurcation will be established later by a $K_s$-monotonicity argument.

\drop{
{\color{red} RC: No entiendo el comentario que sigue. Los equilibrios y la funci\'on $Z$ depende de todos los par\'ametros 
$\delta^a,\delta^b,\beta_1,\beta_2$. Por consiguiente no me queda claro que el an\'alisis se pueda reducir a un \'unico par\'ametro 
de bifurcaci\'on $\delta$}

{\color{cyan}
As mentioned above, given the rest point $x$ and the constant $\vec \beta$, the Routh-Hurwitz positivity condition on $a_4$ translates to the condition on the bifurcation parameter $\delta$
\begin{equation}
\delta < {\delta}_0 = (\nu(\vec \beta) Z(x,\vec \beta))^{-1/2}, \label{delta_cond}
\end{equation}
where ${\delta}_0$ is a bifurcation value for the bifurcation parameter $\delta$. 
}
}
Although we have established the existence of rest points, we have not yet determined how many of these equilibria exist. 
Using a fixed-point argument based on a one dimensional dynamical map associated with the dynamical system, we will 
see that depending on the values of the parameters there may be multiple equilibria. 

\drop{
It will turn out that when this condition occurs, the stable rest point under consideration 
will turn unstable and two new stable equilibria will show up, pointing out the rising of a fold bifurcation in $\delta$ when it crosses 
the value ${\delta}_0$ form left to right.
}

\subsection{Counting stable and unstable equilibria}\label{sec42}

We claim that in the $2\times 2$ case there are either one, two or three rest points. To see this  we reduce \eqref{eq22} 
to a fixed point equation in dimension one. 
Namely, we note that a solution $x$ of \eqref{eq22} is fully determined once we know $\pi^{1a}$ and $\pi^{2a}$. 
Moreover, denoting $w^i=\beta_i(x^{ia}\!-\!x^{ib})$   these Logit probabilities are 
$\pi^{ia}=\rho(w^i)$ where $\rho(w)=1/[1\!+\!\exp(w)]$ so that \eqref{eq22} can be reduced to a $2\times 2$ 
system in the unknowns $w^1$ and $w^2$.
Indeed, substracting the equations in each block of \eqref{eq22} and 
setting $\varphi(w)=\kappa+\delta\rho(w)$ with $\kappa=C_1^a-C_2^b$,
we get 
\begin{equation}\label{eq22prime}
\begin{array}{l}
w^1=\beta_1\,\varphi(w^2)\\
w^2=\beta_2\,\varphi(w^1).
\end{array}
\end{equation}  
Hence $w^1$ is a fixed point of  the scalar function $\psi(w)\triangleq\beta_1\varphi(\beta_2\varphi(w))$,
 which yields a solution of \eqref{eq22prime} with $w^2=\beta_2\,\varphi(w^1)$.
This establishes a one to one correspondence between the solutions of \eqref{eq22} and the fixed points of 
 $\psi$. Moreover, using \eqref{psip} and noting that
$\varphi'(w)\!=\!\delta\rho(w)(1\!-\!\rho(w))$ we get
\begin{eqnarray}\nonumber
\psi'(w^1)&=&\nu\varphi'(w^2)\varphi'(w^1)\\
&=&\nu\delta^2\pi^{1a}(1\!-\!\pi^{1a})\pi^{2a}(1\!-\!\pi^{2a})\nonumber\\
&=&\nu\delta^2\pi^{1a}\pi^{1b}\pi^{2a}\pi^{2b}\label{eqeq}
\end{eqnarray}
so that the rest point $x$ is stable if and only if $\psi'(w^1)<1$. 
 
\vspace{1ex}
\begin{theorem} \label{TT2}
The function $\psi$ has either one, two or three fixed points, and 
exactly one of the following mutually exclusive situations occurs.
\begin{itemize}
\item[(a)] $\psi$ has a unique fixed point $\bar w$ with $\psi'(\bar w)\leq 1$.
The corresponding $\bar x$ is the unique equilibrium of \eqref{ad} and it is stable iff $\psi'(\bar w)< 1$.
\item[(b)] $\psi$ has two fixed points $w_-<w_+$ with either  $\psi'(w_-)<1=\psi'(w_+)$ or $\psi'(w_+)<1=\psi'(w_-)$.
The corresponding points $x_-$ and $x_+$ are the only rest points of \eqref{ad}. One of them is stable and the other is unstable.
\item[(c)] $\psi$ has three fixed points $w_-\!<\!\bar w\!<\!w_+$ with $\psi'(w_-)<1$, $\psi'(w_+)<1$, and $\psi'(\bar w)>1$. 
The corresponding points $x_-$,  $x_+$ and  $\bar x$ are the only rest points of \eqref{ad}, with $x_+$ and $x_-$  
stable and  $\bar x$ unstable.
\end{itemize}
\end{theorem}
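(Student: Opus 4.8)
The plan is to carry out the whole analysis on the scalar map $\psi(w)=\beta_1\varphi(\beta_2\varphi(w))$, using the bijection between its fixed points and the solutions of \eqref{eq22} already established above, together with the identity \eqref{eqeq} (and Theorem~\ref{TT1}), which says that a rest point is stable exactly when $\psi'<1$ at the corresponding fixed point and is an unstable saddle when $\psi'>1$ there. First I would record the elementary facts: $\psi$ is real‑analytic and bounded, since $\psi(w)=\beta_1[\kappa+\delta\,\rho(\beta_2\varphi(w))]$ with $\rho\in(0,1)$, so that $h(w):=\psi(w)-w$ satisfies $h(-\infty)=+\infty$, $h(+\infty)=-\infty$ and hence has at least one zero; and, setting $g(w):=\rho(w)(1-\rho(w))=\tfrac14\operatorname{sech}^2(w/2)>0$, one has $\varphi'(w)=-\delta\,g(w)$, so $\varphi$ is strictly decreasing and $\psi'(w)=\nu\delta^2\,g(w)\,g(\beta_2\varphi(w))>0$ everywhere (the trivial case $\delta=0$, where $\psi$ is constant, falls under~(a)).

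The crux is a shape lemma: $\psi'$ is \emph{unimodal}, i.e.\ strictly increasing on $(-\infty,w^*)$ and strictly decreasing on $(w^*,\infty)$ for a unique $w^*$, so it attains its global maximum $M:=\psi'(w^*)$ at a single point. To prove it I would differentiate $\log\psi'$ (valid since $\psi'>0$): using $(\log g)'(t)=-\tanh(t/2)$ and $\varphi'=-\delta g$, a direct computation gives
\[
\cosh^2(w/2)\,\frac{d}{dw}\log\psi'(w)=\tfrac12\Big(\tfrac{\beta_2\delta}{2}\tanh\!\big(\tfrac{\beta_2\varphi(w)}{2}\big)-\sinh(w)\Big).
\]
The bracketed expression is the difference of a bounded strictly decreasing function of $w$ (since $\varphi$ is strictly decreasing and $\tanh$ strictly increasing) and the strictly increasing unbounded function $\sinh$; hence it is itself strictly decreasing, runs from $+\infty$ to $-\infty$, and so has a unique zero $w^*$, being positive to its left and negative to its right. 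As $\cosh^2(w/2)>0$, the same sign pattern holds for $\tfrac{d}{dw}\log\psi'$, which yields the unimodality, and since $\psi'\to0$ at $\pm\infty$ the point $w^*$ is a maximum. I expect this lemma — in particular pinning down that $\psi'$ has exactly one critical point — to be the main obstacle; everything after it is elementary calculus on $h$.

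With the lemma in hand I split on $M$. If $M\le1$, then $h'=\psi'-1\le0$ with at most one zero (at $w^*$), so $h$ is strictly decreasing from $+\infty$ to $-\infty$: there is a unique fixed point $\bar w$ with $\psi'(\bar w)\le1$, which is case~(a), and the stability dichotomy for $\bar x$ is exactly \eqref{eqeq}/Theorem~\ref{TT1}. If instead $M>1$, then $\{\psi'>1\}$ is a nonempty open interval $(w_1,w_2)$ with $w_1<w^*<w_2$ and $\psi'(w_1)=\psi'(w_2)=1$; consequently $h$ decreases strictly on $(-\infty,w_1]$ from $+\infty$ to $m:=h(w_1)$, increases strictly on $[w_1,w_2]$ from $m$ to $M':=h(w_2)$ (so $m<M'$), and decreases strictly on $[w_2,\infty)$ from $M'$ to $-\infty$.

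Reading the zeros of $h$ off this three‑piece profile finishes the proof. If $m>0$ or $M'<0$ (they cannot both occur, as $m<M'$) there is a single zero, lying where $\psi'<1$ — case~(a). If one of $m,M'$ equals $0$ (necessarily exactly one, again since $m<M'$) there are two zeros, one of them at $w_1$ or $w_2$ where $\psi'=1$ and the other where $\psi'<1$, with the former to the left or right of the latter according to whether $m=0$ or $M'=0$ — case~(b). If $m<0<M'$ there are three zeros $w_-<\bar w<w_+$ with $w_-<w_1$, $\bar w\in(w_1,w_2)$ and $w_+>w_2$, so $\psi'(w_\pm)<1$ while $\psi'(\bar w)>1$ — case~(c). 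In every case the stability/instability assertions are immediate from Theorem~\ref{TT1} (the instability of $\bar x$ in~(c) being the saddle behaviour noted right after that theorem). Finally, the three cases are mutually exclusive, since they are distinguished by the number of fixed points ($1$, $2$ or $3$), and exhaustive, since the sub‑cases above cover all sign patterns of $m$ and $M'$; this completes the argument.
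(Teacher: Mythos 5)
Your proof is correct and follows essentially the same route as the paper: the paper computes $\psi''$ directly and observes that the bracketed factor $\beta_2\delta-\frac{2\beta_2\delta}{1+e^{\beta_2\varphi(w)}}-2\sinh(w)$ is strictly decreasing with a unique zero, which is exactly twice your expression for $\cosh^2(w/2)\,\frac{d}{dw}\log\psi'(w)$, so both arguments rest on the identical key fact that $\psi$ has a single inflection point (equivalently, $\psi'$ is unimodal) and is convex then concave. Your explicit three-piece analysis of $h=\psi-w$ simply spells out the crossing count and the sign of $\psi'$ at each fixed point, which the paper leaves to a figure and to Theorem~\ref{TT1}.
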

\begin{proof}
Let us first observe that when $\delta=0$ the function $\psi$ is constant and has a unique fixed point $\bar w=\beta_1\kappa$
which falls in the situation {\em (a)}. Let us then consider the case $\delta>0$ so that  $\varphi$ is  strictly decreasing.
In this case $\psi$  is strictly increasing and bounded so that it must cross the identity at least once.
On the other hand, a direct calculation yields
\begin{eqnarray}\label{psip}
\psi'(w)\!\!&=&\nu\varphi'(\beta_2\varphi(w))\varphi'(w)\\[1ex]
\psi''(w)\!\!&=&\nu\beta_2\varphi''({\beta}_2 \varphi(w))\varphi'(w)^2+\nu\varphi'({\beta}_2 \varphi(w))\varphi''(w)\nonumber\\
&=&{}\!\!\frac{\nu\delta^2e^{\beta_2\varphi(w)}e^{2w}}{(1\!+\!e^{\beta_2\varphi(w)})^2(1\!+\!e^w)^4}\left[\beta_2\delta-\frac{2\beta_2\delta}{1\!+\!e^{\beta_2\varphi(w)}}-2\sinh(w)\right]\!.
\end{eqnarray}
Recall that $\sinh x = \frac{1}{2}(e^x - e^{-x}), \; x \in \mathbb R$.
The expression in the last square bracket is strictly decreasing 
in $w$ so that $\psi''$ has exactly one zero $w_0$ and therefore $\psi$ is strictly convex on  $(-\infty,w_0]$
and strictly concave on $[w_0,\infty)$. It follows that $\psi$ can cross the identity at most three times  (see Fig. \ref{fi:fp133})
and one of the mutually exclusive situations {\em (a)}-{\em (c)} must occur.
\end{proof}

Figure \ref{fi:fp133} below illustrates  case {\em (c)} where $\psi$ has 3 fixed points with $\psi'(\bar w)>1$.
Depending on the values of the parameters $\kappa,\delta$ and $\nu$, the graph of $\psi$ might shift to the right
so that eventually $\bar w$ and $w_+$ may collapse into a single fixed point producing situation {\em (b)}, after which 
this double fixed point disappears leading to case {\em (a)} with $w_-$ as the only fixed point. Symetrically, if the graph
of $\psi$ is shifted to the left, $\bar w$ may collapse with $w_-$ producing situation {\em (b)}, after which we fall
into case {\em (a)} with unique fixed point $w_+$. Note that the case {\em (a)} with $\psi'(\bar w)=1$ can only
occur when $\psi$ has an inflection at $\bar w$.
\begin{figure}[!tbh]
\centering
\includegraphics[scale=0.5]{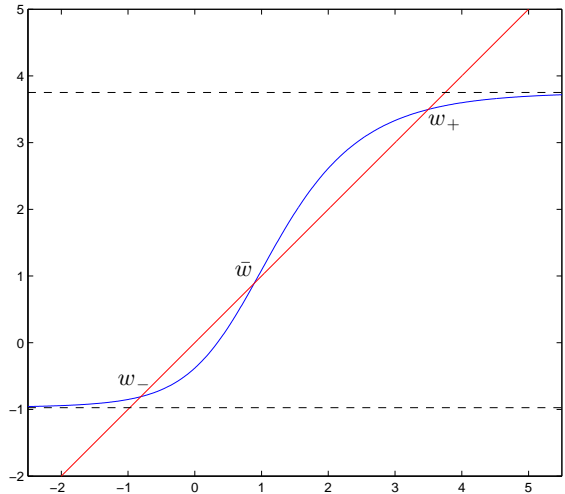}
\caption{Three fixed points with $\psi'(\bar w)>1$.} 
\label{fi:fp133}
\end{figure}

Since $\psi$ is a strictly increasing function in one dimension, from dynamical system theory for maps \cite{guckenheimer}, it follows that for generic cases in Theorem \ref{TT2}, the fixed point is an orientation preserving sink when ${\psi}^{'} < 1$ and an orientation reversing source if ${\psi}^{'} > 1$. In the latter case, two additional fixed points appear ($w_{-}$ and $w_{+}$) which correspond to two stable steady states of the ODE system, indicating a topological change in the structure of the mean field dynamics which exhibits a bifurcation of the fold kind \cite{kuznetsov}. 
\drop{
These analysis complements and is consistent with the results obtained by analyzing the stability of rest points via the Routh-Hurwitz stability criterion, as indicated by the introduction of the critical value ${\delta}_0$ (see (\ref{delta_cond})) of the bifurcation parameter $\delta$.

{\color{red} RC: La misma duda que antes  respecto de esta \'ultima frase --- No me queda claro que baste considerar a $\delta$ como par\'ametro de bifurcaci\'on dado que los equilibrios dependen de todos los par\'ametros. Yo omitir\'ia el \'ultimo comentario ``These analysis complements...'' }
}

In case of multiplicity the rest points satisfy an order relation. Namely, let  $s=(1,-1,-1,1)$ and
consider the partial order $\leq_s$  induced by  the orthant  
$$K_s = \{ x \in {\mathbb R}^4: s_i x_i \ge 0\mbox{ for all } i=1,\ldots, 4 \},$$
that is to say, $x\leq_s y$ if and only if $s_i(y_i-x_i)\geq 0$ for $i=1,\ldots,4$.

\vspace{1ex}
\begin{proposition}\label{order} 
Consider case (c) in Theorem {\em \ref{TT2}} where $\psi$ has three fixed points $w_-\!<\!\bar w\!<\!w_+$. 
Then the corresponding rest points $x_-$,  $x_+$ and  $\bar x$ are ordered as $x_-\leq_s\bar x\leq_s x_+$.
\end{proposition}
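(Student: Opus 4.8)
The plan is to parametrize the rest points by the fixed points of $\psi$, exactly as in the proof of Theorem~\ref{TT2}, and then to observe that along this parametrization each of the four coordinates of $x$ is a monotone function of the scalar variable $w$, with the direction of monotonicity matching the sign pattern $s=(1,-1,-1,1)$.

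First I would recall the correspondence: a fixed point $w$ of $\psi$ determines $\pi^{1a}=\rho(w)$, then $w^2=\beta_2\varphi(w)$ and $\pi^{2a}=\rho(w^2)$, and finally, by \eqref{eq22},
\[
x^{1a}=C_1^a+\delta^a\pi^{2a},\quad x^{1b}=C_2^b-\delta^b\pi^{2a},\quad x^{2a}=C_1^a+\delta^a\pi^{1a},\quad x^{2b}=C_2^b-\delta^b\pi^{1a}.
\]
Two facts will drive the argument: $\delta^a\ge 0$ and $\delta^b\ge 0$ since the cost sequences \eqref{cis} are non-decreasing; and in case (c) one necessarily has $\delta=\delta^a+\delta^b>0$, because the proof of Theorem~\ref{TT2} shows that $\delta=0$ forces case (a), so that $\varphi=\kappa+\delta\rho$ is strictly decreasing.

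Then I would chase the signs. Since $\rho$ is strictly decreasing, $\pi^{1a}=\rho(w)$ decreases with $w$; since $\varphi$ is strictly decreasing, $w^2=\beta_2\varphi(w)$ decreases with $w$, hence $\pi^{2a}=\rho(w^2)$ increases with $w$. Plugging this into the four displayed identities and using $\delta^a,\delta^b\ge 0$ yields: $x^{1a}$ and $x^{2b}$ are non-decreasing in $w$, while $x^{1b}$ and $x^{2a}$ are non-increasing in $w$; in other words $s_i\,x_i$ is a non-decreasing function of $w$ for each $i=1,\dots,4$. In case (c) the fixed points obey $w_-<\bar w<w_+$, so evaluating at these three values gives $s_i(\bar x_i-x_{-,i})\ge 0$ and $s_i(x_{+,i}-\bar x_i)\ge 0$ for all $i$, which is precisely $x_-\leq_s\bar x\leq_s x_+$.

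I do not anticipate a genuine obstacle: the only delicate point is keeping the composition of the three monotone maps aligned with the orthant $K_s$ --- noting in particular that $x^{1a},x^{2b}$ are governed by $\pi^{2a}$, which increases with $w$, whereas $x^{1b},x^{2a}$ are governed by $\pi^{1a}$, which decreases with $w$, and that $\delta^a,\delta^b$ enter \eqref{eq22} with the correct nonnegative coefficients and signs. Alternatively the ordering could be read off from the $K_s$-monotonicity of the mean field flow established later, but the direct parametrization argument above is shorter and self-contained.
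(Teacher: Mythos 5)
Your proposal is correct and follows essentially the same route as the paper's own proof: both parametrize the rest points by the fixed points $w$ of $\psi$, use the monotonicity of $\rho$ and $\varphi$ to compare $\pi^{1a}$ and $\pi^{2a}$ across $w_-<\bar w<w_+$, and read off the coordinatewise inequalities from \eqref{eq22} using $\delta^a,\delta^b\geq 0$. Your explicit remark that $\delta>0$ in case (c) is a small point the paper leaves implicit, but otherwise the arguments coincide.
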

\begin{proof}
Since $w^1_-=w_-<w_+=w^1_+$, and since $\rho(\cdot)$ is decreasing,  the corresponding 
Logit probabilities  satisfy the inequality 
$$\pi^{1a}_-=\rho(w^1_-)>\rho(w^1_+)=\pi^{1a}_+$$
so that
the third and fourth equations in \eqref{eq22} yield $x^{2a}_-\geq x^{2a}_+$ and 
$x^{2b}_-\leq x^{2b}_+$.  Now, since $\varphi(\cdot)$ is decreasing it follows that 
$w^2_-=\beta_2\varphi(w^1_-)>\beta_2\varphi(w^1_+)=w^2_+$
and similarly as before we get 
$$\pi^{2a}_-=\rho(w^2_-)<\rho(w^2_+)=\pi^{2a}_+$$
so that the first and second equations in \eqref{eq22} yield  $x^{1a}_-\leq x^{1a}_+$ and 
$x^{1b}_-\geq x^{1b}_+$.
Hence $x_-\leq_s x_+$. Analogous arguments yield
$x_-\leq \bar x$ and $\bar x\leq_s x_+$.
\end{proof}

\subsection{$K_s$-monotonicity and asymptotics of the dynamics}\label{sec43}

The symmetric sign distribution of the Jacobian coefficients in (\ref{jac22}) motivates the use of results of monotone or order-preserving dynamical systems theory \cite{smith}. 
Such dynamical systems naturally occur in biology, chemistry, physics and economics, where the notion of cooperative vector field shows up. For those vector fields, a natural order relationship 
among trajectories rises which severely restricts the long-term behavior of the positive semi-flows. 
In the field of game theory, cooperative dynamics have been used to study the asymptotics of 
 best response dynamics as well as fictitious play for supermodular games (see {\em e.g.} \cite{beggs,benaim_faure,benaim_hirsch,borkar,hofbauer_sandholm}).
Here we use the results for $K_s$-monotonic dynamical systems to describe the global behavior of the learning dynamics in the $2 \times 2$ 
traffic game. Note however that, in contrast with the previously cited applications in games, the nature of congestion games leads to competitive rather than cooperative dynamics.

Let $s=(1,-1,-1,1)$ as in the previous section. The dynamics \eqref{ad} turn out to be  $K_s$-monotone, 
which gives a more complete picture of its asymptotic behavior. 
Recall \cite[Smith]{smith} that a flow ${\phi}_t(x_0)$ generated by a system of ordinary differential equations
$$\left\{\begin{array}{l}
\dot x(t) = f(x(t)),\\
x(0)=x_0
\end{array}\right.
$$
is called $K_s$-monotone if it  
preserves the partial order $\leq_s$, namely, 
for all $x_0,y_0\in U$ such that $x_0 \; {\le}_s \; y_0$ it holds that 
${\phi}_t(x_0) \; {\le}_s \; {\phi}_t(y_0)$ for all times $t\geq 0$ at which both solutions are defined.

We will use Lemma 2.1 in \cite[Smith]{smith} which gives necessary and sufficient conditions for the flux of 
an ODE system to be $K_s$-monotone in terms of the signs of the entries of the Jacobian matrix of the system.

\vspace{1ex}
\begin{proposition}\label{mono}
Consider a $2 \times 2$ traffic game.
Then,
\begin{itemize}
\item[a)] The mean field system \eqref{ad} is $K_s$-monotone for $s=(1,-1,-1,1)$.
\item[b)] If there exists  some $T>0$ such that $\phi_T(x_0)\!<_s\!x_0$ or   
$x_0\!<_s\!\phi_T(x_0)$, then the orbit $\phi_t(x_0)$ converges 
to a rest point as $t\to \infty$.
\end{itemize}
\end{proposition}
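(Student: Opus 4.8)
For part~(a), the plan is to invoke Lemma~2.1 of Smith~\cite{smith}, which says that the flow of $\dot x = f(x)$ on a convex open set is $K_s$-monotone (for $s\in\{\pm1\}^n$ and $K_s$ the corresponding orthant) if and only if the off-diagonal entries of the Jacobian satisfy $s_is_j\,\partial f_i/\partial x_j\ge 0$ for all $i\ne j$. So the task reduces to a finite sign check on the Jacobian~\eqref{jac22}. Ordering coordinates as $(x^{1a},x^{1b},x^{2a},x^{2b})$ and $s=(1,-1,-1,1)$, I would go through the twelve off-diagonal entries of~\eqref{jac22}. Within the $(1,1)$ block the off-diagonal entries are $0$, and likewise for the $(2,2)$ block, so those are trivially fine. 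For the cross-blocks, recall from the discussion after~\eqref{girks} that $\partial G^{ir}/\partial x^{ks}\le 0$ when $s=r$ and $\ge 0$ when $s\ne r$; and from~\eqref{jac22} the nonzero cross-block entries are exactly $\mp\beta_k z_k\pi^{is}\delta^s$ with a minus sign when $s=r$. One then checks case by case that $s_i s_j$ carries precisely the sign needed to make the product nonnegative: for instance $\partial G^{1a}/\partial x^{2a}=-\beta_2 z_2\pi^{1a}\delta^a\le 0$ while $s_1 s_3=(1)(-1)=-1$, so $s_1 s_3\,\partial G^{1a}/\partial x^{2a}\ge 0$; and $\partial G^{1a}/\partial x^{2b}=+\beta_2 z_2\pi^{1a}\delta^a\ge 0$ while $s_1 s_4=(1)(1)=1$, giving a nonnegative product; the remaining entries (rows $x^{1b}$, $x^{2a}$, $x^{2b}$) are handled identically using $s_2 s_3=1$, $s_2 s_4=-1$, $s_1 s_3=-1$, $s_1 s_4=1$, $s_2 s_3 = 1$, $s_2 s_4 = -1$ respectively. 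Since all required products are $\ge 0$, Smith's Lemma~2.1 applies and~\eqref{ad} is $K_s$-monotone on $\mathbb{R}^4$.

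For part~(b), I would appeal to the standard convergence theorem for monotone flows (Smith~\cite{smith}, e.g.\ the Monotone Convergence / ``Limit Set'' theorem): if a $K_s$-monotone semiflow has a trajectory that is eventually monotone with respect to $\leq_s$ — meaning there is $T>0$ with $\phi_T(x_0)\leq_s x_0$ (resp.\ $x_0\leq_s\phi_T(x_0)$), where here the strict hypothesis $\phi_T(x_0)<_s x_0$ or $x_0<_s\phi_T(x_0)$ is assumed — then applying $\phi_t$ to this inequality and using $K_s$-monotonicity gives that $t\mapsto\phi_{t+T}(x_0)$ and $t\mapsto\phi_t(x_0)$ are ordered, which by iterating along the sequence $t=nT$ produces a monotone (in $\leq_s$) sequence $\phi_{nT}(x_0)$. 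To conclude convergence one needs this sequence to be bounded; this follows because the dynamics~\eqref{ad} is forward invariant on a compact box — indeed each $\dot x^{ir}=\pi^{ir}(C^{ir}(x^{-i})-x^{ir})$ points inward on the box $\prod_{i,r}[\min_u C^r_u,\max_u C^r_u]$, so all forward orbits are bounded. A bounded monotone sequence in a finite-dimensional ordered vector space converges, say $\phi_{nT}(x_0)\to x^\ast$, and then continuity of the flow together with the monotone-convergence argument (the whole orbit is squeezed between consecutive iterates) forces $\phi_t(x_0)\to x^\ast$ as $t\to\infty$; finally $x^\ast$ is a rest point because it is the limit of the orbit of a flow. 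Alternatively, one can cite directly the ``eventually monotone orbits converge'' conclusion in Smith's monograph, which packages exactly this argument.

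The only genuine obstacle is bookkeeping rather than depth: part~(a) is a careful sign audit that must match the coordinate ordering and the chosen $s$ exactly, and it is easy to flip a sign; I would write out the $4\times 4$ pattern of $s_is_j$ explicitly alongside~\eqref{jac22} to make the verification transparent. Part~(b) is a citation of standard monotone-systems theory, the one subtlety being to note boundedness of forward orbits (via the invariant box) so that the monotone sequence $\phi_{nT}(x_0)$ actually has a limit; I would state this invariance as a one-line remark since it is immediate from the sign of $C^{ir}(x^{-i})-x^{ir}$ at the boundary of the box.
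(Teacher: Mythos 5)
Your part (a) is essentially the paper's own argument: Smith's Lemma 2.1 reduces $K_s$-monotonicity to the sign check $s_is_j J_{ij}\ge 0$ for the off-diagonal entries of \eqref{jac22}, and with $s=(1,-1,-1,1)$ this holds. (Your in-line formula for the cross-block entries, $\mp\beta_k z_k\pi^{is}\delta^s$, should carry the row route, i.e.\ $\pi^{ir}\delta^r$, but your explicit worked entries are the correct ones, so this is only a slip of notation.)

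Part (b), however, has a genuine gap. From the single-time comparison $x_0\le_s\phi_T(x_0)$ and $K_s$-monotonicity you get $\phi_{nT}(x_0)\le_s\phi_{(n+1)T}(x_0)$, hence (using the boundedness you correctly establish via the invariant box) convergence of the subsequence $\phi_{nT}(x_0)$ to some $x^\ast$ with $\phi_T(x^\ast)=x^\ast$. But the hypothesis says nothing about how $\phi_t(x_0)$ compares with $x_0$ for $0<t<T$, so the full orbit is \emph{not} squeezed between consecutive iterates as you claim; each fixed-phase subsequence $\phi_{nT+r}(x_0)$ is monotone and converges, but possibly to different limits $y_r$, and all this argument yields is that $\omega(x_0)$ is a $T$-periodic orbit, not a rest point. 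For the same reason the orbit is not ``eventually monotone,'' so the alternative theorem you propose to cite does not apply under the stated hypothesis. The missing ingredient --- and the reason the paper's proof explicitly notes that the Jacobian \eqref{jac22} has exactly one zero in each row and column, hence is irreducible --- is strong monotonicity: \cite[Lemma 2.3(b)]{smith} assumes irreducibility precisely to exclude a nontrivial periodic limit (roughly, strong monotonicity upgrades $\phi_T(x_0)>_s x_0$ to a strongly ordered comparison that persists for an open interval of return times, forcing the limit set to be periodic with incommensurable periods and therefore a single equilibrium). To close the proof you must either verify irreducibility and invoke that lemma, as the paper does, or supply this additional argument; the bounded-monotone-sequence reasoning alone does not suffice.
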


\begin{proof}
According to \cite[Lemma 2.1]{smith},  a system is $K_s$-monotone if and only if 
for all $x \in U$ the matrix $P_s J(x) P_s$ has non-negative off-diagonal 
elements, where $J(x)=Df(x)$ and $P_s = \mbox{diag }(s_1,\ldots,s_n)$.
Since the Jacobian  of  \eqref{ad}  is given 
by \eqref{jac22}, the off-diagonal terms $(P_s J(x) P_s)_{ij} = s_i s_j J_{ij}$
are non-negative if and only if
$$s_1 s_3 < 0~;~s_1 s_4 > 0~;~s_2 s_3 > 0~;~s_2 s_4 < 0. 
$$
This holds for $s=(1,-1,-1,1)$  proving assertion {\em a)}.

In order to prove {\em b)} we first note that  for all $i,r$ we have $C^{ir}\!(x^{\mbox{-}i})\in [C_1^r,C_N^r]$,
from which it follows that all the forward orbits of \eqref{ad} are bounded.
Moreover, since the Jacobian \eqref{jac22} has only one zero in each row and  column, 
then $J(x)$ is irreducible in the sense of \cite{smith}. Hence we may invoke \cite[Lemma 2.3(b)]{smith}
which asserts precisely that the existence of $T>0$ with either $\phi_T(x_0)<_s x_0$ or $\phi_T(x_0)>_s x_0$
implies that the orbit $\phi_t(x_0)$ converges to a rest point.
 \end{proof}

\vspace{1ex}
\begin{theorem} \label{TT3}Consider the mean field dynamics \eqref{ad} for a $2\times 2$ traffic game.
Then for almost all initial conditions $x_0\in\mathbb{R}^4$ the corresponding orbit $x(t)=\phi_t(x_0)$
converges to a rest point. Moreover, the union of the basins of attraction of all the steady states
is dense in $\mathbb{R}^4$.
\end{theorem}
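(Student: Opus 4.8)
The plan is to obtain Theorem~\ref{TT3} from Proposition~\ref{mono} combined with the generic convergence theory for strongly monotone flows, using Theorem~\ref{TT2} only to describe the global picture in the three cases (a)--(c). First I would pass to a compact absorbing box. Since $C^{ir}\!(x^{\mbox{-}i})\in[C_1^r,C_2^r]$ for all $i,r$, the box $B=\prod_{i\in I}\prod_{r\in R}[C_1^r,C_2^r]$ is compact and positively invariant for \eqref{ad}: on each bounding face the relevant component $\dot x^{ir}=\pi^{ir}(C^{ir}-x^{ir})$ has the sign of $C^{ir}-x^{ir}$ and hence points inward, and the same estimate (with a uniform inward drift bound off $B$, the degenerate case $\delta^a=\delta^b=0$ being trivial since then there is a unique global attractor) shows that every forward orbit is bounded and enters $B$ after a finite transient. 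Since each time-$T$ map $\phi_T$ is a $C^1$ diffeomorphism of $\mathbb{R}^4$ it preserves Lebesgue-null sets and dense sets, and the set of non-convergent initial conditions of $\mathbb{R}^4$ equals $\bigcup_{k\ge 1}\phi_{-k}(N)$ with $N$ the set of non-convergent points of $B$; so both assertions transfer from $B$ to $\mathbb{R}^4$, and it suffices to work on $B$.

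On $B$ the flow preserves $\leq_s$ for $s=(1,-1,-1,1)$ by Proposition~\ref{mono}(a), and it is strongly monotone by the irreducibility observed in that proof; equivalently, the substitution $x\mapsto P_sx$ turns \eqref{ad} into a $C^1$ cooperative irreducible system on the compact invariant box $P_sB$, all of whose orbits are precompact. Hirsch's generic convergence theorem for such systems (see \cite{smith}) then asserts that the set of $x_0\in B$ whose $\omega$-limit set is not a single rest point has Lebesgue measure zero, i.e.\ almost every orbit converges to a point of $S$; this is the a.e.\ strengthening of Proposition~\ref{mono}(b), since for almost every $x_0$ the orbit is eventually strictly $\leq_s$-monotone and then converges by that proposition. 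Combined with the reduction above, this is the first assertion. I expect this measure-zero statement to be the only non-routine point: it does not follow from Proposition~\ref{mono}(b) by itself, because an orbit need never become $\leq_s$-comparable to its starting point and the set where $G$ does not point into $\mathrm{int}\,K_s$ or $-\mathrm{int}\,K_s$ may have positive measure; the work is either to invoke Hirsch's theorem correctly for the orthant cone $K_s$, or to reproduce its argument --- almost every point of $B$ can be squeezed in the order $\leq_s$ between points whose forward orbits are eventually monotone and converge to a common equilibrium, which pins down the limit of the squeezed orbit.

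For the ``moreover'' I would appeal to Theorem~\ref{TT2}. Let $m,M$ be the $\leq_s$-extreme corners of $B$; one checks $G(m)\geq_s 0$ and $G(M)\leq_s 0$, so $\phi_t(m)$ and $\phi_t(M)$ are $\leq_s$-monotone and converge to rest points $p^*\leq_s q^*$, whence $\omega(x_0)\subseteq[p^*,q^*]_s$ for every $x_0\in B$. In case (a) the unique rest point $\bar x$ forces $p^*=q^*=\bar x$, so \emph{every} orbit converges to $\bar x$ and its basin is all of $\mathbb{R}^4$. In cases (b) and (c) the rest points are $\leq_s$-ordered (Proposition~\ref{order}, and the same argument in case (b)), and the stable ones are hyperbolic sinks because there $a_4>0$ and all eigenvalues of the Jacobian have negative real part by Theorem~\ref{TT1}, so their basins are open; since the union of the basins of all rest points is exactly the set of convergent points, which has full measure by the previous step, this union is dense in $\mathbb{R}^4$. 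For the stricter reading in which only the attractors are counted, the remaining rest point $\bar x$ of case (c) is a hyperbolic saddle whose stable manifold is a proper submanifold with empty interior (and in case (b) one adds the stable set of the single non-hyperbolic rest point), so the two sinks' basins already form a dense open set.
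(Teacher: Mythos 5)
Your proof of the first assertion coincides with the paper's: both rest on the observation that all forward orbits of \eqref{ad} are bounded, that there are finitely many rest points, and that the system is $K_s$-monotone and irreducible (Proposition~\ref{mono}), after which Hirsch's generic convergence theorem --- cited in the paper as \cite[Theorem 2.5]{smith} --- gives convergence to a rest point for almost every initial condition. Where you diverge is in the ``moreover'': the paper invokes \cite[Theorem 2.6]{smith}, which for a bounded, open, positively invariant set containing finitely many equilibria yields density of the union of the \emph{interiors} of the basins, applied to $(-R,R)^4$ and then $R\to\infty$; you instead observe that a full-measure set is automatically dense (which settles the statement as literally written) and then, for the stricter reading where only attractors count, argue via the order-confinement $\omega(x_0)\subseteq[p^*,q^*]_s$, hyperbolicity of the sinks when $a_4>0$, and the fact that the saddle's stable manifold has empty interior. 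Your route is more self-contained and makes explicit that the second assertion is nearly a corollary of the first, while the paper's citation of Smith's Theorem~2.6 delivers the slightly stronger ``interiors of basins'' conclusion in one stroke and also covers the non-hyperbolic equilibrium of case (b) without a separate argument. Two small cautions: an orbit starting outside your box $B$ only approaches it asymptotically (the inward drift $\pi^{ir}(C^{ir}-x^{ir})$ degenerates as $\pi^{ir}\to 0$), so it need not enter $B$ in finite time --- but this is harmless, since precompactness of forward orbits is all the generic convergence theorem requires and the paper works directly on $\mathbb{R}^4$; and Hirsch's theorem by itself gives only quasiconvergence ($\omega$-limit set contained in the set of equilibria), so the finiteness of $S$, which you use only implicitly through Theorem~\ref{TT2}, is genuinely needed to conclude convergence to a single rest point.
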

\begin{proof}
We already noted that all forward orbits of \eqref{ad} are bounded, and also that there are  finitely many
rest points. Also, from Proposition \ref{mono} we know that 
\eqref{ad} is $K_s$-monotone and irreducible. Hence \cite[Theorem 2.5]{smith} asserts precisely
that the orbits $\phi_t(x_0)$ converge towards a rest point for almost all initial conditions $x_0\in\mathbb{R}^4$,
which establishes the first claim.

In order to prove the second assertion we use \cite[Theorem 2.6]{smith} which states that if $X_0$ is an open, 
bounded, positively invariant set for a $K_s$-monotone system, whose closure $\bar{X}_0$ contains finitely many
rest points $E$, then the union of the interiors of the basins of attraction $B_e$ of these rest points 
$\cup_{e\in E\cap \bar X_0}\mathop{\rm int}(B_e)$ is dense in $X_0$. In our setting we may apply this result to an open rectangle
$X_0=(-R,R)^4$ with $R>\max\{C^a_2,C^b_2\}$, which is easily seen to be positively invariant for the mean field dynamics  \eqref{ad}.
Then the union of the basins of attraction of the (finitely many) rest points  is dense in $(-R,R)^4$
and we  conclude by letting $R\to\infty$.
\end{proof}

According to \cite[Lemma 2.2]{smith}  the matrix $P_s (\frac{\partial {\phi}_t}{\partial x_0}(x_0)) P_s$ 
has all its entries strictly positive for all $t > 0$, and therefore \eqref{ad} cannot have an attracting 
closed orbit nor two points of an $\omega$-limit set related by ${\le}_s$.
Also, by the Perron-Frobenius Theorem \cite{meyer} the spectral radius of 
$P_s (\frac{\partial {\phi}_t}{\partial x_0}(x_0)) P_s$  is a positive simple eigenvalue 
strictly larger in modulus than all the remaining eigenvalues, and the corresponding 
eigenvector is positive. For further properties of totally positive matrices we refer to \cite{ando}. 

Let us also observe that the Jacobian $J(x)$ has an eigenvalue equal to $$s(J(x)) = \max \{\Re({\lambda}): \lambda \mbox{ eigenvalue of }J(x)\}.$$
Theorem 2.7 in \cite{smith} provides a set of equivalent conditions for the stability condition $s(J(x)) < 0$. In particular, condition (iv) in \cite[Theorem 2.7]{smith} matches Routh-Hurwitz criterion   \cite{murray},  while the Gerschgorin criterion \cite{morton} follows from condition  (ii) in \cite[Theorem 2.7]{smith}.

From a bifurcation analysis viewpoint, a rest point loses stability when a simple real eigenvalue crosses from negative to positive \cite{guckenheimer}. 
Therefore, locally stable Hopf bifurcations cannot occur for $K_s$-monotone systems, although unstable Hopf bifurcations cannot be excluded \cite{kuznetsov}.  
The only local bifurcation involving an exchange of stability that a steady state can be involved in is a steady state bifurcation. 
The next result provides additional information on the dynamics \eqref{ad} in the presence of an unstable equilibrium.
Recall that in this case there are 3 rest points $x_-\leq_s\bar x\leq_s x_+$ with $\bar x$ unstable and the other two  stable.

\vspace{1ex}
\begin{theorem}\label{TT5} Consider the case (c) in Theorem {\em \ref{TT2}} where $\psi$ has three fixed points $w_-\!<\!\bar w\!<\!w_+$.
Then the unstable equilibrium $\bar x$  is connected by two monotone heteroclinic orbits to the stable 
rest points $x_-$ and $x_+$. 
\end{theorem}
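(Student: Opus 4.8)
The plan is to realize the two heteroclinic connections as the two branches of the one‑dimensional invariant curve through $\bar x$ that carries the dominant mode of $J(\bar x)$, and to let $K_s$‑monotonicity decide where each branch goes.

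First I would read off the linearization at $\bar x$. Proposition~\ref{mono} shows that the flow of \eqref{ad} is $K_s$‑monotone and irreducible, hence strongly monotone (as recorded after the proof of Theorem~\ref{TT3}), so $P_sJ(\bar x)P_s$ has non‑negative off‑diagonal entries and is irreducible. By the Perron--Frobenius theorem, $\lambda_1:=s(J(\bar x))=\max\{\Re(\lambda):\lambda\text{ eigenvalue of }J(\bar x)\}$ is a \emph{simple real} eigenvalue, strictly larger in real part than every other eigenvalue, with an eigenvector $v$ that can be chosen so that $v\gg_s 0$. In case (c) we have $\psi'(\bar w)>1$, equivalently $a_4=\det J(\bar x)<0$; since a real $4\times 4$ matrix with negative determinant has a positive real eigenvalue, necessarily $\lambda_1>0$. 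Because $\lambda_1$ is a simple, strictly dominant eigenvalue, the classical invariant‑manifold theory yields a one‑dimensional $C^1$ invariant curve $W$ through $\bar x$, tangent to $\mathrm{span}(v)$, on which $\bar x$ is repelling (backward orbits in $W$ converge to $\bar x$). Parametrizing the local piece as $\bar x+\xi v+o(\xi)$ and using $v\in\mathrm{int}(K_s)$, the branch $W^{+}$ ($\xi>0$) lies near $\bar x$ in $\{z:z\gg_s\bar x\}$ and the branch $W^{-}$ ($\xi<0$) in $\{z:z\ll_s\bar x\}$.

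Next I would follow the forward orbit through a point $p\in W^{+}$ close to $\bar x$. There $G(p)=\xi\lambda_1 v+o(\xi)\gg_s 0$, so $\phi_\sigma(p)\gg_s p$ for all small $\sigma>0$, and by $K_s$‑monotonicity $t\mapsto\phi_t(p)$ is non‑decreasing for $\leq_s$. All forward orbits are bounded (as noted in the proof of Proposition~\ref{mono}), so $\phi_t(p)$ converges monotonically to a rest point $q$ with $q\geq_s p\gg_s\bar x$. By Theorem~\ref{TT2}(c) there are exactly three rest points, and Proposition~\ref{order} combined with strong monotonicity applied to the ordered equilibria gives the strict chain $x_-\ll_s\bar x\ll_s x_+$; hence $x_+$ is the only rest point that is $\geq_s\bar x$ and distinct from $\bar x$, so $q=x_+$. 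Since $W$ is invariant, $\phi_t(p)\to\bar x$ as $t\to-\infty$, and therefore the orbit through $p$ is a monotone heteroclinic orbit from $\bar x$ to $x_+$. The mirror‑image argument on $W^{-}$ (velocities $\ll_s 0$, orbit non‑increasing, limit $\leq_s\bar x$ and distinct from $\bar x$, hence $x_-$) produces the second monotone heteroclinic orbit, from $\bar x$ to $x_-$.

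The delicate point is the correct use of the strong unstable manifold: the whole construction hinges on $\lambda_1$ being real, simple and strictly dominant — exactly what Perron--Frobenius for the irreducible quasipositive matrix $P_sJ(\bar x)P_s$ delivers — so that the one‑dimensional $C^1$ invariant curve tangent to $v$ exists, together with $v\gg_s 0$, which forces its two branches to be strongly ordered relative to $\bar x$ and hence (via monotonicity and boundedness) to converge to $x_+$ and $x_-$. A further point, which sharpens the picture (so that these become the \emph{only} heteroclinic orbits issuing from $\bar x$) but is not required for the statement, is to verify that $\bar x$ has no other eigenvalue with positive real part, i.e. that $W=W^u(\bar x)$ is one‑dimensional; this can be pursued through the Routh array of $p(\lambda)$ together with the inequalities of Lemma~\ref{L3}, but requires additional control of the signs of $a_2$ and $a_3$ in the regime $a_4<0$.
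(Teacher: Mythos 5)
Your argument is correct and follows essentially the same route as the paper: both hinge on applying Perron--Frobenius to the irreducible quasipositive matrix $P_sJ(\bar x)P_s$ to conclude that $\bar s=s(J(\bar x))$ is a simple, strictly dominant, positive eigenvalue of $J(\bar x)$ with eigenvector in the interior of $K_s$. The only difference is that the paper then invokes \cite[Theorem 2.8]{smith} as a black box to produce the two ordered heteroclinic curves, whereas you reconstruct that theorem's content from the strong unstable manifold theorem together with the monotone convergence criterion of Proposition \ref{mono}(b) and the ordering of equilibria in Proposition \ref{order}.
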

\begin{proof} The matrix $J(\bar x)$ is similar to $A=P_sJ(\bar x)P_s$ which is irreducible and has non-negative
off-diagonal entries. Hence for $\alpha>0$ large enough $\alpha I + A$ is non-negative and irreducible so that 
by the Perron-Frobenius Theorem we have that $s(\alpha I + A)=\alpha+s(J(\bar x))$ is a simple eigenvalue 
of $\alpha I + A$ with corresponding eigenvector $v\geq 0$. It follows that $\bar s=s(J(\bar x))$ is a simple 
eigenvalue of $J(\bar x)$ with eigenvector $\bar v=P_s v$, and since $\bar x$ is unstable we have $\bar s>0$. The conclusion 
then follows by a straightforward application of \cite[Theorem 2.8]{smith} which establishes the existence of two 
$C^1$ heteroclinic curves $y_+:[0,\infty)\to\mathbb{R}^4$
and $y_-:[0,\infty)\to\mathbb{R}^4$ such that
\begin{itemize}
\item[a)] $y_\pm(r)=\bar x \pm r \bar v +o(r)$ for $r\to 0$ and $y_\pm(r)\to x_\pm$ for $r\to\infty$,
\item[b)] for each $r\geq 0$ the orbits started from $y_\pm(r)$ follow these curves and are given 
by $\phi_t(y_\pm(r))=y_\pm(r e^{\bar s t})$ for all $t\geq 0$, and
\item[c)] for all $0\leq r_1\leq r_2$ we have $y_+(r_1)\leq_s y_+(r_2)$ and $y_-(r_2)\leq_s y_-(r_1)$.
\end{itemize}
\end{proof}


Even though a $K$-monotone system cannot have an attracting periodic orbit, when time is reversed these systems often do have attracting closed orbits. It can be shown that every closed orbit has a simple Floquet multiplier \cite{kuznetsov} which exceeds unity. This largest multiplier gives rise to a very unstable cylinder manifold associated with the closed orbit which has monotonicity properties \cite{smith}. In the specific case of \eqref{ad} we do not know whether the time reversed dynamics have an attracting closed orbit or not.

%
%
%
%
%
%

\subsection{The $2\times 2$ case with symmetric players}\label{sec44}

Theorem \ref{TT2} establishes three possible situations for the number and stability of the rest points 
of the mean field dynamics. However, it does not provide explicit conditions on the data of the 
traffic game to predict which situation will occur. We will show next that this is possible when the two 
players are identical with $\beta_1=\beta_2$. In the sequel we denote $\beta$ this common value.
Introducing the function $\theta(w)=\beta \varphi(w)$, the system \eqref{eq22prime} becomes 
\begin{equation}\label{eq22prime2}
\begin{array}{l}
w^1=\theta(w^2)\\
w^2=\theta(w^1)
\end{array}
\end{equation}
so that both $w^1$ and $w^2$ are  fixed points of $\psi(w)=\theta\circ \theta(w)$. 

Now, the function $\theta(w)$ decreases from $\beta(\kappa+\delta)$ at $w=-\infty$ to $\beta\kappa$ at $w=\infty$,
so that  it has a unique fixed point $\bar w=\theta(\bar w)$. 
This yields a unique symmetric solution $w^1=w^2=\bar w$  for \eqref{eq22prime2}, which corresponds to a  symmetric
rest point $\bar x$ with $\bar x^{1a}=\bar x^{2a}$ and $\bar x^{1b}=\bar x^{2b}$.

Note that if $\psi$ admits another fixed point $w^1\neq\bar w$ then $w^2=\theta(w^1)$ is also a fixed
point. Since $\theta$ is decreasing, if $w^1<\bar w$ then $w^2>\bar w$ and viceversa so that 
$\bar w$ is always  the centermost fixed point of $\psi$. Hence case {\em (b)} 
in Theorem \ref{TT2} cannot occur and $\psi$ has either one or three distinct fixed points.

\vspace{1ex}
\begin{theorem} \label{TT4}
Consider a  $2\times 2$ symmetric traffic game. 
Let $\bar x$ be the symmetric rest point associated with   the unique fixed point $\bar w$ of $\theta$.

\begin{itemize}
\item[(a)] If $\psi'(\bar w)\leq 1$ then $\bar w$ is the only fixed point of $\psi$ and
$\bar x$ is the unique rest point of \eqref{ad}. This rest point is stable if and only if 
$\psi'(\bar w)<1$.
\item[(b)] If $\psi'(\bar w)>1$ then $\psi$ has exactly three fixed points $w^1<\bar w< w^2$ with $w^1\!=\!g(w^2)$
and $w^2\!=\!g(w^1)$, and we have
$$\psi'(w^1)=\psi'(w^2)=\theta'(w^1)\theta'(w^2)\!<\!1.$$ 
The symmetric rest point $\bar x$ is unstable and there are two stable non-symmetric
equilibria: one defined by $\pi^{1a}=\rho(w^1), \pi^{2a}=\rho(w^2)$, and the other one with the identities of the players
exchanged.
\end{itemize}
\end{theorem}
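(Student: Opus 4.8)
The plan is to specialize the reduction of Theorem~\ref{TT2} to the symmetric case and exploit the extra algebraic structure carried by $\psi=\theta\circ\theta$. Much of the groundwork is already in place: as noted just before the statement, $\bar w$ is always the centermost fixed point of $\psi$, so $\psi$ has either exactly one or exactly three fixed points, and in the latter case they are $w^1<\bar w<w^2$ with $w^2=\theta(w^1)$ and $w^1=\theta(w^2)$. The first thing I would record is the pair of derivative identities obtained by differentiating $\psi=\theta\circ\theta$ and using $\theta(\bar w)=\bar w$, $\theta(w^1)=w^2$, $\theta(w^2)=w^1$:
\[
\psi'(\bar w)=\theta'(\bar w)^2\ge 0,\qquad \psi'(w^1)=\theta'(w^1)\theta'(w^2)=\psi'(w^2).
\]
The second identity already yields the equality $\psi'(w^1)=\psi'(w^2)=\theta'(w^1)\theta'(w^2)$ asserted in part~(b); what remains is to show that $\psi$ has three fixed points precisely when $\psi'(\bar w)>1$, together with the sharp inequalities $\psi'(\bar w)>1$ and $\psi'(w^1)=\psi'(w^2)<1$ in that regime.

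For the dichotomy, the direction ``$\psi'(\bar w)>1\Rightarrow$ three fixed points'' is easy: $\psi$ is bounded (it increases from $\theta(\beta(\kappa+\delta))$ at $-\infty$ to $\theta(\beta\kappa)$ at $+\infty$), so $\psi(w)-w$ runs from $+\infty$ down to $-\infty$, and if its slope at $\bar w$ exceeds $1$ it must be negative just left of $\bar w$ and positive just right of it, producing by the intermediate value theorem one fixed point on each side of $\bar w$. For the converse I would invoke the strict convex/concave structure of $\psi$ established in the proof of Theorem~\ref{TT2}: $\psi$, hence also $g:=\psi-\mathrm{id}$, is strictly convex on $(-\infty,w_0]$ and strictly concave on $[w_0,\infty)$. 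Since a strictly convex or strictly concave $C^1$ function has at most two zeros, three distinct fixed points force $w^1<w_0<w^2$; then $\bar w$ is an endpoint of either the strictly convex arc $[w^1,\bar w]$ (if $\bar w\le w_0$) or the strictly concave arc $[\bar w,w^2]$ (if $\bar w\ge w_0$), whose two endpoints are consecutive zeros of $g$. A strictly convex function vanishing at both ends of an interval is strictly negative in the interior with strictly negative left-derivative and strictly positive right-derivative (symmetrically for strictly concave), so reading off $g'$ at the relevant endpoints gives $g'(\bar w)>0$ and $g'(w^1)<0$ in the first case (or $g'(\bar w)>0$ and $g'(w^2)<0$ in the second), that is $\psi'(\bar w)>1$ and, using the algebraic identity, $\psi'(w^1)=\psi'(w^2)<1$.

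With this in hand the rest is bookkeeping. By identity~\eqref{eqeq}, any fixed point $w$ of $\psi$ corresponds to a rest point $x$ with $\psi'(w)=\nu\delta^2\pi^{1a}\pi^{1b}\pi^{2a}\pi^{2b}$, which by Theorem~\ref{TT1} is stable if and only if this quantity is $<1$. Hence if $\psi'(\bar w)\le1$ then $\bar w$ is the unique fixed point, $\bar x$ is the unique rest point of~\eqref{ad}, and it is stable iff $\psi'(\bar w)<1$, giving part~(a); while if $\psi'(\bar w)>1$ the three fixed points $w^1<\bar w<w^2$ produce the unstable symmetric rest point $\bar x$ and two stable ones. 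Finally, using the one-to-one correspondence between fixed points of $\psi$ and solutions of~\eqref{eq22}, in which a fixed point $w$ yields $\pi^{1a}=\rho(w)$ and $\pi^{2a}=\rho(\theta(w))$, the fixed point $w^1$ produces the equilibrium with $(\pi^{1a},\pi^{2a})=(\rho(w^1),\rho(w^2))$ and $w^2$ the one with these two values interchanged, i.e.\ with the players exchanged. (The excluded case $\delta=0$ is immediate, since then $\psi$ is constant with $\psi'\equiv0$ and a unique stable rest point, which falls under part~(a).) I expect the main obstacle to be the strictness of the inequalities $\psi'(\bar w)>1$ and $\psi'(w^1)=\psi'(w^2)<1$: monotonicity of $\psi$ only delivers the non-strict versions from the sign changes of $\psi-\mathrm{id}$, and one genuinely needs the strict convex-then-concave shape of $\psi$ and the strict separation $w^1<w_0<w^2$ to rule out the borderline configurations.
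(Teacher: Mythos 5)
Your proposal is correct and follows essentially the same route as the paper: reduce to the scalar map $\psi=\theta\circ\theta$, use the fact that $\bar w$ is the centermost fixed point to exclude case (b) of Theorem~\ref{TT2}, invoke the strict convex--concave shape of $\psi$ for the count and the strict derivative inequalities, and translate back to stability via \eqref{eqeq} and Theorem~\ref{TT1}. The only difference is presentational: the paper disposes of the theorem in one line by citing Theorem~\ref{TT2} together with the preceding remarks, whereas you partially re-derive the trichotomy of that theorem (the IVT and convexity arguments), which is redundant but sound.
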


\begin{proof} This follows from the previous comments and Theorem \ref{TT2}.
\end{proof}

\begin{figure}[!tbh]
\centering
\includegraphics[scale=0.375]{stab3.eps}
\includegraphics[scale=0.375]{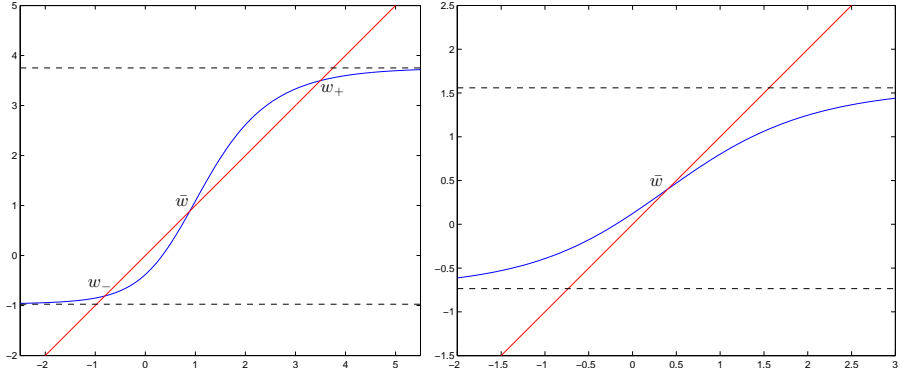}
\caption{Fixed points of $\psi$:  $\psi'(\bar w)>1$ (left) vs  $\psi'(\bar w)<1$ (right).} 
\label{fi:fp13}
\end{figure}

As seen above, in the symmetric case the number of rest points and their nature 
are completely determined by the condition $\psi'(\bar w)<1$. 
The latter can be checked directly 
from the data definining the game. More precisely,
denoting 
\begin{equation}\label{parc}
c=(C_1^a+C_2^a)-(C_1^b+C_2^b)
\end{equation}
we have the following characterization established in \cite{maldonado}.
For the sake of completeness we include a proof. 
We recall that the  hyperbolic arctangent function is defined by
 $\mathop{\rm arctanh} x = \frac{1}{2} \ln \left( \frac{1+x}{1-x} \right)$ for $x \in (-1,1)$.

\vspace{1ex}
\begin{proposition} \label{PPP} Consider the parameters $\mu=\beta\delta$ and $q=c/\delta$, as well as
the function $h(x)=x\,\mathop{\rm arctanh}\sqrt{1\!-\!x} \,+\sqrt{1\!-\!x}$.
Then  $\psi'(\bar w)<1$ if and only if  $|q|< h(4/\mu)$.
\end{proposition}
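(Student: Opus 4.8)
The plan is to reduce the inequality $\psi'(\bar w)<1$ to a single scalar condition and then trade the equilibrium variable for the data $(\mu,q)$. Since $\bar w=\theta(\bar w)$ we have $\psi'(\bar w)=\theta'(\bar w)^2$, and evaluating the expression \eqref{eqeq} at the symmetric rest point, where $\pi^{1a}=\pi^{2a}=p$ and $\pi^{1b}=\pi^{2b}=1-p$ with $p=\rho(\bar w)$, gives the compact form
\begin{equation*}
\psi'(\bar w)=\nu\delta^2\,[p(1-p)]^2=\big(\mu\,p(1-p)\big)^2,
\end{equation*}
using $\nu=\beta^2$ and $\mu=\beta\delta$. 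Hence $\psi'(\bar w)<1$ is equivalent to the scalar inequality $\mu\,p(1-p)<1$ in the single unknown $p\in(0,1)$.

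Next I would express $p$ in terms of $q=c/\delta$ via the fixed-point equation. Writing $\bar w=\theta(\bar w)=\beta\kappa+\mu p$ together with $\bar w=\ln\frac{1-p}{p}$, and using $c=2\kappa+\delta$ (so that $\kappa=(c-\delta)/2$ and therefore $\beta\kappa=\mu(q-1)/2$), the fixed-point equation becomes $\ln\frac{1-p}{p}=\tfrac{\mu}{2}(q-1)+\mu p$. Introducing the change of variable $t=1-2p\in(-1,1)$ and recalling that $\ln\frac{1-p}{p}=2\mathop{\rm arctanh}(t)$ and $p(1-p)=\tfrac14(1-t^2)$, this collapses to
\begin{equation*}
q=t+\frac{4}{\mu}\,\mathop{\rm arctanh}(t),
\end{equation*}
which sets up a smooth, strictly monotone, odd correspondence between the datum $q$ and the equilibrium quantity $t$ (its derivative is $\tfrac{4}{\mu}(1-t^2)^{-1}+1>0$).

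Finally I would locate the stability boundary and read off the sign. The boundary $\psi'(\bar w)=1$ is $\mu\,p(1-p)=1$, i.e. $1-t^2=4/\mu$, so $t=\pm\sqrt{1-4/\mu}$; substituting these two values into $q=t+\tfrac{4}{\mu}\mathop{\rm arctanh}(t)$ gives precisely $q=\pm\big(\tfrac{4}{\mu}\mathop{\rm arctanh}\sqrt{1-4/\mu}+\sqrt{1-4/\mu}\big)=\pm\,h(4/\mu)$. Thus the critical curve in the $q$--variable is exactly $|q|=h(4/\mu)$, and tracking the inequality $\mu\,p(1-p)<1$ through the monotone correspondence $q=q(t)$ yields the claimed equivalence $\psi'(\bar w)<1\iff|q|<h(4/\mu)$. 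I expect the delicate point to be this last step: one must pin down on which side of the critical curve the strict inequality holds, carefully combining the oddness and monotonicity of $q(\cdot)$ with the even dependence of the stability inequality on $t$, and one must handle the regime $\mu\le 4$ (equivalently $4/\mu\ge 1$), where $h$ is not defined by the given formula and the symmetric rest point is unconditionally stable, so that $|q|<h(4/\mu)$ should be read as automatically satisfied.
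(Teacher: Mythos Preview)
Your approach is essentially the paper's, only with a cleaner substitution: both reduce $\psi'(\bar w)<1$ to a scalar inequality via $\bar w=\theta(\bar w)$ and then translate it into the data $(\mu,q)$. The paper works with $\bar z=1+e^{\bar w}$ and the roots of $z^2-\mu z+\mu=0$, then manipulates the resulting pair of inequalities into the symmetric form $|q|<g(\mu)$ and finally identifies $g(\mu)=h(4/\mu)$. Your change of variable $t=1-2p$ bypasses all of this and lands directly on the one-line relation $q=t+(4/\mu)\mathop{\rm arctanh} t$, from which the critical curve $|q|=h(4/\mu)$ is immediate. So up to the identification of the boundary your argument is correct and more transparent than the paper's.

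The step you flag as delicate is indeed the crux, and if you carry it out you will find the inequality comes out \emph{reversed}. The stability condition $\mu\,p(1-p)<1$ reads $1-t^2<4/\mu$, i.e.\ $|t|>t_0:=\sqrt{1-4/\mu}$ (for $\mu>4$). Since $q(t)=t+(4/\mu)\mathop{\rm arctanh} t$ is odd and strictly increasing, $|t|>t_0$ is equivalent to $|q|>q(t_0)=h(4/\mu)$, not $|q|<h(4/\mu)$. Your own $\mu\le 4$ sanity check already signals this: $h(4/\mu)\to h(1)=0$ as $\mu\downarrow 4$, so the set $\{|q|<h(4/\mu)\}$ shrinks to a point precisely where everything is stable---the opposite of ``automatically satisfied.'' The paper's proof has the same slip in its very first sentence, writing $\psi'(\bar w)<1\Leftrightarrow\theta'(\bar w)<-1$ when it should be $\theta'(\bar w)>-1$; the rest of its algebra is correct, so what it actually derives is that $|q|<h(4/\mu)$ characterizes $\psi'(\bar w)>1$, consistent with Figure~\ref{fi:bp} where the region between the curves is the unstable one.
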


\begin{proof}  Noting that $\psi'(\bar w)=\theta'(\bar w)^2$  the condition $\psi'(\bar w)<1$ is
equivalent to $\theta'(\bar w)<-1$. Since $\theta'(\bar w) = - \beta\delta e^{\bar w} (1+e^{\bar w})^{-2}$,
denoting $\bar z=1+e^{\bar w}$ this can be written as
$\mu (\bar z-1)>\bar z^2$ which translates into the fact that $\bar z$ lies strictly between the roots 
of $z^2-\mu \bar z+\mu=0$, that is $z_-<\bar z<z_+$ where
\begin{equation}\label{roots}
z_\pm=\mbox{$\frac{1}{2}$}[\mu\pm\sqrt{\mu(\mu-4)}].
\end{equation}
Denoting $w_\pm=\ln( z_\pm-1)$ this is equivalent to $w_-<\bar w<w_+$.
Since $\bar w$ is the fixed point of $\theta(\cdot)$ and since this function is decreasing,
this  is in turn equivalent to the  two inequalities
$$\theta(w_-)>w_-\quad;\quad\theta(w_+)<w_+.$$
Now, observing that $\theta(w_\pm)=\beta\kappa+\beta\delta/z_\pm$, the latter can be written as
$$\ln(z_--1)-\frac{\beta\delta}{z_-}<\beta\kappa<\ln(z_+-1)-\frac{\beta\delta}{z_+}.$$
Replacing the expression \eqref{roots} for $z_\pm$ and denoting
$$f_\pm(\mu)=\frac{1}{\mu}\ln \left( \frac{\mu\pm\sqrt{\mu(\mu-4)}}{2}-1 \right) -\frac{2}{\mu\pm\sqrt{\mu(\mu-4)}}$$
these inequalities can be expressed as
\begin{equation}\label{last}
f_-(\mu)<\frac{\kappa}{\delta}<f_+(\mu).
\end{equation}
Now, by direct computation one can check that $f_-(\mu)+f_+(\mu)=-1$ so that,
setting $g(\mu)=2f_+(\mu)+1$,  we may restate \eqref{last} in the symmetric form
$$-g(\mu)<1+\frac{2\kappa}{\delta}<g(\mu).$$
The conclusion follows by noting that $1+2\kappa/\delta=q$ and
$g(\mu)=h(4/\mu)$.
\end{proof}

Figure \ref{fi:bp} illustrates the stability region for the $2 \times 2$ symmetric game. 
This plot confirms that for $\mu =\beta\delta< 4$, which is equivalent to the condition $\nu\delta^2<16$
 in the comment after Theorem \ref{TT1}, the symmetric equilibrium $\bar x$ is the unique rest point and it is 
stable. When $\theta'(\bar w)$ crosses the value $-1$
the symmetric steady state loses stability and two new stable equilibria appear. This is
due to the crossing of the imaginary axis by a simple negative eigenvalue. Bifurcation theory for maps 
also predicts this situation which is indicative of a fold bifurcation (see \cite{kuznetsov}).
\begin{figure}[!tbh]
\centering
\includegraphics[scale=0.6]{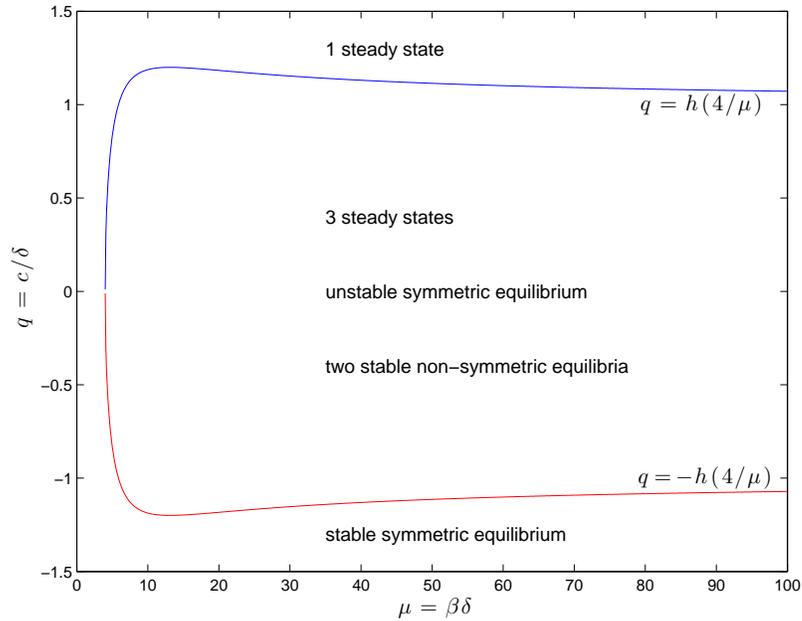}
\caption{Stability region for a $2 \times 2$ symmetric game in the $\mu$-$q$ parameters.} 
\label{fi:bp}
\end{figure}

\section{Summary and future work}
In this paper, we gave a necessary and sufficient condition for the stability of equilibria 
for a $2\times 2$ symmetric traffic game. The main tool used was the Routh-Hurwitz 
stability criterion for dynamical systems. We also exploited  the results for $K$-monotone systems 
to establish the asymptotic convergence of the mean field dynamics.

With respect to Proposition \ref{PPP} we conjecture that the condition $|q|<h(4/\beta\delta)$ for having a unique stable equilibrium
might still hold for the case of non-symmetric players provided that 
we take $\beta=\sqrt{\beta_1\beta_2}$.

The extension to  the general case with $M$ routes and $N$ players seems much more difficult.
However our numerical simulations show that the dynamics still converge to equilibria
and exhibit a certain regularity in the structure of the attractors. Namely, for the case 
with identical players $\beta_i\equiv\beta$, there is a unique {\em symmetric} equilibrium 
in which $\bar x^{ir}=\bar x^{jr}$ for all routes $r$ and different players $i,j$. When $\beta$ is small
this is the only rest point and a global attractor, whereas for larger $\beta$ the symmetric equilibrium becomes unstable and there appear non-symmetric rest points $(x^i)_{i\in I}$
with $x^i\neq x^j$ for some $i\neq j$. However, these non-symmetric 
equilibria exhibit clusters of players with the same estimate vector $x^i$'s and, interestingly, 
the number of players in each cluster is stable across the different equilibria.
It would be interesting to understand the reasons for such stable structure. 

Another possible line of research is to investigate the extension of the results 
from routing games to general 2-player games and beyond. Although some asymptotic results 
were already described in \cite{cominetti} for the adaptive dynamics in general finite games,  
they hold only in the regime where the Logit parameters $\beta_i$ are small. Since
the results in this paper strongly exploit the symetric structure of the Jacobian matrix 
in \S{3}, it is unclear if and how the arguments could be adapted to general games.

\end{document}